\documentclass[11pt, reqno]{amsart}
\usepackage{amsmath, amsthm, amsfonts, amssymb, amsbsy, bigstrut, graphicx, enumerate,  upref, longtable, comment, booktabs, array, caption, subcaption}

\captionsetup[subfloat]{labelfont=normalfont}

\usepackage{pstricks}
\usepackage{times}






\usepackage[numbers, sort&compress]{natbib}

\usepackage[breaklinks]{hyperref}











\newtheorem{thm}{Theorem}[section]
\newtheorem{lmm}[thm]{Lemma}
\newtheorem{cor}[thm]{Corollary}

\newtheorem{conj}[thm]{Conjecture}
\theoremstyle{definition}
\newtheorem{remark}[thm]{Remark}

\newcommand{\ee}{\mathbb{E}}

\newcommand{\mf}{\mathcal{F}}

\newcommand{\cp}{\mathcal{P}}
\newcommand{\cq}{\mathcal{Q}}

\newcommand{\pp}{\mathbb{P}}

\newcommand{\rr}{\mathbb{R}}

\newcommand{\ve}{\varepsilon}

\newcommand{\zz}{\mathbb{Z}}




\numberwithin{equation}{section}

\usepackage[utf8]{inputenc}
\usepackage{amsmath}
\usepackage{amsfonts}
\usepackage{bbm}
\usepackage{mathtools}
\usepackage{tikz}
\usetikzlibrary{decorations.pathreplacing}
\usepackage{amsthm}
\usepackage[english]{babel}
\usepackage{fancyhdr}
\usepackage{amssymb}
\usepackage{enumitem}
\usepackage{qtree}
\usepackage{mathrsfs}









\setcounter{tocdepth}{2}

\begin{document}
\title[Stationary ballistic deposition]{Existence of stationary ballistic deposition on the infinite lattice}
\author{Sourav Chatterjee}

\address{Departments of mathematics and statistics, Stanford University}
\email{souravc@stanford.edu}
\thanks{Research partially supported by NSF grants DMS-1855484 and DMS-2113242}
\keywords{Ballistic deposition, interface growth, random surface, Markov process, stationary distribution}
\subjclass[2020]{60K35, 60J10, 60G10}

\begin{abstract}
Ballistic deposition is one of the many models of interface growth that are believed to be in the KPZ universality class, but have so far proved to be largely intractable mathematically. In this model, blocks of size one fall independently as Poisson processes  at each site on the $d$-dimensional lattice, and either attach themselves to the column growing at that site, or to the side of an adjacent column, whichever comes first. It is not hard to see that if we subtract off the height of the column at the origin from the heights of the other columns, the resulting interface process is Markovian. The main result of this article is that this Markov process has at least one invariant probability measure. We conjecture that the invariant measure is not unique, and provide some partial evidence. 
\end{abstract}

\maketitle

\section{Introduction}
\subsection{The model}
Ballistic deposition is a popular model of interface (or surface) growth introduced by~\citet{vold59} and subsequently studied by many authors. The standard version of the model considered in the probability literature~(e.g., in~\cite{seppalainen00, penrose08, penroseyukich02}) is the following. In the $(d+1)$-dimensional version of the model, the surface at time $t\in [0,\infty)$ is described by a height function $h(t,\cdot)$, with $h(t,x)$ denoting the height of the surface at location $x\in \zz^d$. At each location, there is a Poisson clock that rings at rate~$1$. Each time the clock rings at $x$, a block of size $1$ falls at $x$ from infinity. As the block falls, it either attaches itself to the top of the existing column of blocks at $x$, or to the side of one of the neighboring columns, whichever comes first (see Figure \ref{illusfig} for an illustration). That is, if the clock rings at time $x$, the height at $x$ gets updated instantaneously to 
\[
\max\biggl\{\max_{1\le i\le d} h(t, x\pm e_i), \, h(t,x)+1\biggr\},
\]
where $e_1,\ldots,e_d$ are the standard basis vectors of $\rr^d$. This becomes the value of $h(s,x)$ for all $s\in (t,t']$, where $t'$ is the next time that the clock rings at site $x$. It is not obvious that this process is well-defined on the infinite lattice $\zz^d$, because the updates happen in continuous time instead of discrete, and they are happening all over the lattice. For a proof of the fact that $h(t,x)$ is well-defined and finite almost surely, see~\cite{penrose08}.

\begin{figure}
\centering
\begin{tikzpicture}[scale = .5]
\draw [fill = lightgray] (0,0) rectangle (4,1);
\draw [fill = lightgray] (5,0) rectangle (16,1);
\draw [fill = lightgray] (18,0) rectangle (20,1);
\draw (1,0) -- (1,1);
\draw (2,0) -- (2,1);
\draw (3,0) -- (3,1);
\draw (6,0) -- (6,1);
\draw (7,0) -- (7,1);
\draw (8,0) -- (8,1);
\draw (9,0) -- (9,1);
\draw (10,0) -- (10,1);
\draw (11,0) -- (11,1);
\draw (12,0) -- (12,1);
\draw (13,0) -- (13,1);
\draw (14,0) -- (14,1);
\draw (15,0) -- (15,1);
\draw (19,0) -- (19,1);
\draw [fill = lightgray] (1,1) rectangle (3,2);
\draw [fill = lightgray] (5,1) rectangle (6,2);
\draw [fill = lightgray] (9,1) rectangle (10,3);
\draw [fill = lightgray] (11,1) rectangle (14,2);
\draw [fill = lightgray] (15,1) rectangle (16,3);
\draw [fill = lightgray] (18,1) rectangle (20,2);
\draw [fill = lightgray] (2,2) rectangle (3,6);
\draw [fill = lightgray] (5,2) rectangle (7,3);
\draw [fill = lightgray] (13,2) rectangle (14,4);
\draw [fill = lightgray] (18,2) rectangle (19,4);
\draw [fill = lightgray] (17,3) rectangle (18,4);
\draw [fill = lightgray] (9,3) rectangle (11,4);
\draw [fill = lightgray] (9,4) rectangle (10,5);
\draw (10,3) -- (10,4);
\draw (2,1) -- (2,2);
\draw (2,3) -- (3,3);
\draw (2,4) -- (3,4);
\draw (2,5) -- (3,5);
\draw (6,2) -- (6,3);
\draw (9,2) -- (10,2);
\draw (12,1) -- (12,2);
\draw (13,1) -- (13,2);
\draw (13,3) -- (14,3);
\draw (15,2) -- (16,2);
\draw (18,3) -- (19,3);
\draw (4,0) -- (5,0);
\draw (16,0) -- (18,0);
\draw (19,1) -- (19,2);
\draw (4,10) rectangle (5,11);
\draw (9,11) rectangle (10,12);
\draw (12,12) rectangle (13,13);
\draw [dashed] (4,2) rectangle (5,3);
\draw [dashed] (9,5) rectangle (10,6);
\draw [dashed] (12,3) rectangle (13,4);
\draw [-stealth] (4.5,9) -- (4.5,7);
\draw [-stealth] (9.5,10) -- (9.5,8);
\draw [-stealth] (12.5,11) -- (12.5,9);
\end{tikzpicture}
\caption{Example of ballistic deposition growth: The three blocks falling from above attach themselves either to top of a column, or to the side of a column, whichever comes first. The dashed rectangles denote their final locations after dropping.\label{illusfig}}
\end{figure}
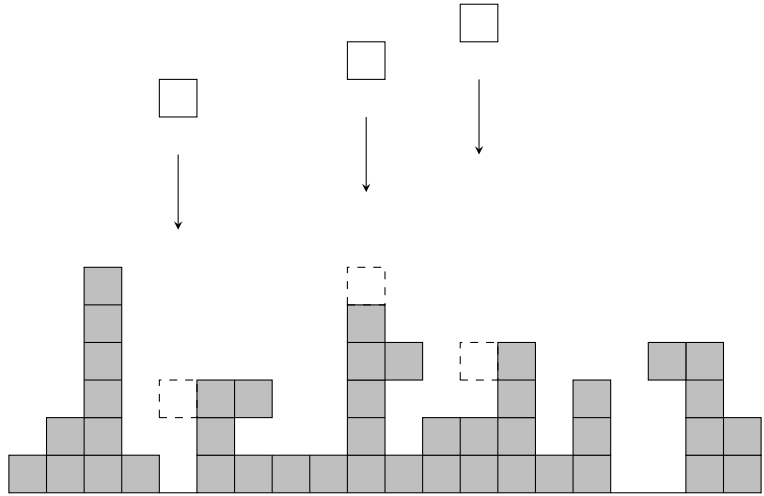

\subsection{Results}
It is clear that $\{h(t,\cdot)\}_{t\ge 0}$ is a time-homogeneous Markov process on the state space $\Omega := \zz^{\zz^d}$, 
equipped with the product topology and the Borel $\sigma$-algebra generated by this topology. One cannot expect that this process has a stationary law, since the height at any site will tend to infinity as $t\to\infty$. However, it turns out that if we subtract off $h(t,0)$ from every $h(t,x)$, then the resulting process does have a stationary distribution. This is the main result of this article.
\begin{thm}\label{mainthm}
Let $w(t,x) := h(t,x)-h(t,0)$. The process $\{w(t,\cdot)\}_{t\ge 0}$ is a time-homogeneous Markov process on $\Omega$ with at least one stationary probability distribution. The stationary distribution can be chosen such that the height at each location has finite $L^1$ norm. Equivalently, the gradient process $\{\delta h(t,\cdot)\}_{t\ge 0}$ is a Markov process with at least one stationary distribution, where $\delta h(t,x) := (h(t, x+e_i)-h(t,x), \, i=1,\ldots,d)$. Moreover, the stationary distribution of the gradient process can be chosen to be translation invariant on the lattice, as well as invariant under lattice symmetries, and with finite $L^1$ norm at each site. 
\end{thm}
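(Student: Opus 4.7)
The natural strategy is Krylov--Bogolyubov: verify the Feller property for $\eta(t, \cdot) := h(t, \cdot) - h(t, 0)$ on $\Omega$ with its product topology, start from a well-chosen initial distribution, form the Cesàro time averages
\begin{equation*}
\mu_T := \frac{1}{T} \int_0^T \text{Law}(\eta(t, \cdot))\,dt,
\end{equation*}
and extract a weak limit $\mu$ along some $T_k \to \infty$ using a tightness bound; any such $\mu$ is then automatically invariant. The Feller property should reduce, via the graphical construction from \cite{penrose08}, to the fact that on any finite time window the height at a fixed site depends only on Poisson clocks in an a.s.\ finite region of space-time; since $\Omega$ carries the product topology, continuity of the semigroup in each coordinate is enough.

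\textbf{Initial condition and symmetries.} Start from the flat configuration $h(0, \cdot) \equiv 0$. It is invariant under all lattice translations and under all lattice symmetries fixing the origin, and the dynamics preserves both classes of symmetry, so the law of $\delta h(t, \cdot)$ inherits these invariances for every $t \ge 0$. In particular, $\E[h(t, x) - h(t, 0)] = 0$ for all $t$ and $x$. These invariances will pass through both Cesàro averaging and weak limits, which is what produces the translation- and symmetry-invariance claimed for the gradient process.

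\textbf{Main obstacle: the $L^1$ bound.} On $\Omega$ with the product topology, tightness of a family of probability measures is equivalent to tightness of every one-dimensional marginal, and by Markov's inequality this follows from
\begin{equation*}
\sup_{t \ge 0} \E|h(t, x) - h(t, 0)| < \infty \quad \text{for each fixed } x \in \zz^d.
\end{equation*}
By translation invariance and the triangle inequality along a lattice path from $0$ to $x$, it suffices to treat $x = e_i$, i.e., to bound the expected magnitude of a single-edge gradient. This is the heart of the argument. Heuristically the bound is plausible: immediately after each clock ring at $x$, the outward gradient $h(t, x+e_i) - h(t, x)$ is forced to be non-positive, while after each clock ring at $x + e_i$ it is forced to be non-negative, so each edge gradient is ``reset'' in sign at rate $2$. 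Combining this reset mechanism with the monotonicity of ballistic deposition (pointwise ordering of two height functions is preserved under coupling via the same Poisson clocks), one should be able to construct a Lyapunov functional of the local configuration whose mean drift is negative whenever $|h(t, x + e_i) - h(t, x)|$ is large, and thereby obtain the uniform $L^1$ bound. This is the step I expect to carry all the genuine mathematical content, because ballistic deposition is not integrable and the corresponding two-point estimate is essentially a piece of KPZ scaling without any exact formula to fall back on.

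\textbf{Concluding the argument.} Granted the $L^1$ bound, the remainder is soft. Tightness yields a subsequential weak limit $\mu$ of $\{\mu_T\}$; the Feller property upgrades $\mu$ to an invariant measure for the height-difference semigroup; Fatou's lemma transfers the uniform $L^1$ bound to $\mu$; and translation and lattice-symmetry invariance of the gradient process descend to $\mu$ because both kinds of symmetry act continuously on $\Omega$ and fix each $\mu_T$. Finally, since the gradient process is a continuous image of $\eta(t, \cdot)$ under $\omega \mapsto (\omega(x + e_i) - \omega(x))_{x, i}$, the invariant measure constructed here automatically pushes forward to the stationary distribution for the gradient process asserted in the second half of the theorem.
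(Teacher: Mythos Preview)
Your overall architecture --- Krylov--Bogolyubov on the centered process, Feller property via the graphical construction, tightness from an $L^1$ bound on nearest-neighbor gradients, passing symmetries and integrability through the weak limit --- is exactly the paper's. But the one step you flag as carrying all the content, the $L^1$ bound, is where your proposal diverges and leaves a genuine gap.

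First, a small but relevant correction: you assert that what is needed is $\sup_{t\ge 0}\ee|h(t,e_i)-h(t,0)|<\infty$. The paper does \emph{not} prove this, and it is not obvious. For tightness of the Ces\`aro averages $\mu_T$ one only needs the Ces\`aro bound
\[
\frac{1}{t}\int_0^t \ee|h(s,e_i)-h(s,0)|\,ds \le C(d),
\]
and this is precisely what the paper proves (Lemma~\ref{bdlmm}).

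Second, and more importantly, your proposed mechanism --- a local Lyapunov functional with negative drift when the gradient is large, built out of the sign-reset heuristic and monotonicity --- is not how the paper proceeds, and you give no indication of how to actually construct such a functional. The paper's argument is global and indirect. Write $\alpha(t)=\ee[h(t,0)]$ and $\beta(t)=\ee[\max\{h(t,0),h(t,\pm e_1),\ldots,h(t,\pm e_d)\}]$. The update rule gives, essentially, $\alpha'(t)\ge \beta(t)-\alpha(t)$, since a clock ring at $0$ raises the height to at least the neighboring maximum. An elementary inequality (Lemma~\ref{mainlmm}) says that for any real numbers, $\max_i x_i-\frac{1}{n}\sum_i x_i\ge c(n)\sum_{i<j}|x_i-x_j|$; applied here it yields $\beta(t)-\alpha(t)\ge c(d)\,\ee|h(t,e_i)-h(t,0)|$. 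Integrating and invoking the linear growth bound $\alpha(t)\le C(d)t$ from \cite{penrose08} gives the Ces\`aro $L^1$ bound. In short: a large gradient forces the \emph{mean height} to grow fast, but the mean height is known to grow only linearly, so the time-averaged gradient must be bounded. Your reset/Lyapunov picture does not capture this mechanism, and without a concrete construction it remains a heuristic rather than a proof.
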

Given Theorem \ref{mainthm}, a question that immediately comes to mind is whether the invariant measure is unique. We do not have an answer to this question, and leave it as an open problem. Nevertheless, there are some strong indications that the invariant measure is not unique. These are presented in Subsection \ref{unique} below.

Theorem \ref{mainthm} proves the existence of a stationary distribution, but does not tell us how to generate from a stationary distribution if we would like to do so on a computer. One problem is that computers cannot generate from an infinite lattice. To get around this issue, we formulate the next result. Instead of the whole lattice, consider the finite box $B_N := [-N,N]^d \cap \zz^d$. Starting with $f(0,x)=0$ for all $x\in\zz^d$, update $f$ at discrete times as follows. At each time $n$, draw a vertex $x$ uniformly at random from $B_N$. Then define
\[
f(n+1,x) := \max\biggl\{\max_{1\le i\le d} f(n, x\pm e_i), \, f(n,x)+1\biggr\}.
\]
Define $f(n+1,y)=f(n,y)$ for all other $y$. Note that although $f$ is defined on the whole lattice, it remains zero outside $B_N$ at all times. This is how ballistic deposition is usually simulated on a computer (e.g., in~\cite{pagnaniparisi15}). 
\begin{thm}\label{simulthm}
Let $f$ be defined as above. Given $p\in (0,1)$, generate $n$ from the geometric distribution with success probability $p$, and let $u_{p}(x) := f(n,x) - f(n,0)$. Suppose that $\{p(N)\}_{N\ge 1}$ is a sequence of numbers in $(0,1)$ such that $N^{-d-1}\ll p(N)\ll N^{-d}$ as $N\to \infty$. Then the sequence $\{u_{p(N)}\}_{N\ge 1}$ is a tight family in $\Omega$, and every weakly convergent subsequence converges to a stationary distribution of $\{h(t,\cdot)-h(t,0)\}_{t\ge 0}$, where $h$ is the continuous-time ballistic deposition process defined earlier.
\end{thm}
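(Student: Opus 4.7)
The plan is to (a) couple the finite-box simulation $f$ with the infinite-lattice continuous-time process $h$ (both started at height zero) via shared Poisson clocks, so that finite speed of propagation identifies $u_{p(N)}$ with $h(T_n,x)-h(T_n,0)$ for an exponentially distributed random time $T_n$; (b) establish tightness of the family $\{u_{p(N)}\}$ using the stationary distribution supplied by Theorem \ref{mainthm}; and (c) show that every subsequential weak limit is stationary via a Krylov--Bogolyubov argument tailored to the exponential distribution of $T_n$.

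For (a), realize $f$ as the jump chain of a continuous-time process $\tilde f$ on $B_N$ driven by independent rate-$1$ Poisson clocks at the sites of $B_N$, so that $f(n,\cdot)=\tilde f(T_n,\cdot)$ with $T_n$ the time of the $n$-th event, and construct $h$ on the same probability space using these clocks on $B_N$ and independent clocks on $\zz^d\setminus B_N$. A standard Laplace-transform computation gives that when $n\sim\mathrm{Geom}(p(N))$ is independent of the clocks, $T_n$ is \emph{exactly} Exponential with mean $\mu_N:=1/(p(N)|B_N|)$, and the hypotheses $N^{-d-1}\ll p(N)\ll N^{-d}$ yield $\mu_N\to\infty$ and $\mu_N\ll N$. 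For any fixed finite $A\subset\zz^d$, influence from $\zz^d\setminus B_N$ reaching $A\cup\{0\}$ by time $T_n$ requires a chain of at least $\Omega(N)$ successive Poisson events along a lattice path, whose total duration stochastically dominates a sum of $\Omega(N)$ iid $\mathrm{Exp}(1)$ variables; a Peierls-type contour bound (as in \cite{penrose08}) combined with $\P(T_n>C\mu_N)=e^{-C}$ and $\mu_N\ll N$ shows that, on an event of probability $1-o(1)$, $u_{p(N)}(x)=h(T_n,x)-h(T_n,0)$ for every $x\in A$.

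For (b), by Theorem \ref{mainthm} let $\pi$ be the translation-invariant stationary distribution of the gradient process with $\E_\pi|\eta(x)|<\infty$, and let $h^\pi$ be the ballistic deposition process on $\zz^d$ started from a random initial height whose gradient has law $\pi$, coupled to $h$ via the same Poisson clocks. Writing
\[
h(t,x)-h(t,0)=\bigl[h^\pi(t,x)-h^\pi(t,0)\bigr]+\bigl[D(t,0)-D(t,x)\bigr],
\]
with $D(t,z):=h^\pi(t,z)-h(t,z)$, the first bracket is $\pi$-distributed at every $t$ and hence $L^1$-bounded. The key tool for the second term is the contractive inequality, valid at every update of the clock at $z$: $|D(t^+,z)|$ is bounded above by the maximum of $|D(t^-,w)|$ over $w\in\{z,\,z\pm e_1,\,\ldots,\,z\pm e_d\}$ (from $|\max(a,b)-\max(c,d)|\le\max(|a-c|,|b-d|)$ applied to the ballistic deposition update). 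Combined with the monotonicity of the coupling ($\eta\le\eta'$ implies $h^\eta\le h^{\eta'}$ under shared clocks) and the finite $L^1$ norm of $D(0,\cdot)$, this yields a uniform-in-$t$ $L^1$ bound on $|D(t,0)-D(t,x)|$.

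For (c), let $\nu$ be a subsequential weak limit of the laws $\nu_N$ of $u_{p(N)}$, and let $(P_s)_{s\ge 0}$ be the Markov semigroup of the gradient process. For any bounded continuous cylinder function $F$ and any $s>0$, the Markov property of $h$ together with finite speed of propagation applied at the shifted time $T_n+s$ (still $\ll N$ with high probability) give
\[
\int P_sF\,d\nu_N=\E\bigl[F(h(T_n+s,\cdot)-h(T_n+s,0))\bigr]+o(1).
\]
Since $T_n$ is exactly Exponential with mean $\mu_N\to\infty$, the total-variation distance between the laws of $T_n$ and $T_n+s$ equals $2(1-e^{-s/\mu_N})\to 0$, so $\int P_sF\,d\nu_N-\int F\,d\nu_N\to 0$. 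Passing to the limit (using the Feller property of $P_s$, which holds by finite speed of propagation for the infinite-lattice dynamics) gives $\nu P_s=\nu$ for every $s>0$, establishing stationarity of $\nu$. The main obstacle is step (b): obtaining the uniform-in-$t$ $L^1$ control of $|D(t,0)-D(t,x)|$ from the contractive structure of the coupled ballistic deposition processes is delicate, and is the place where the finite-$L^1$ stationary distribution from Theorem \ref{mainthm} plays its essential role.
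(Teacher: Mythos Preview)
Your steps (a) and (c) are essentially correct and mirror the paper's strategy: the paper also realizes the discrete simulation as the jump chain of the continuous-time process on $B_N$, computes that the geometric update count corresponds to an exponential time $T_n$ with mean $\mu_N=1/(p(N)|B_N|)$, couples $h_N$ with $h$ via finite speed of propagation (its Lemmas on $S(x,P)$ and $\rho(x,P)$), and then runs the Krylov--Bogolyubov argument using that the law of an exponential random variable with diverging mean is asymptotically invariant under translation by a fixed $s$.

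The genuine gap is in (b), and you have correctly identified it yourself as the ``main obstacle.'' The contractive inequality
\[
|D(t^+,z)|\le \max_{|w-z|_1\le 1}|D(t^-,w)|
\]
controls only the $\sup$ of $|D|$, which is already infinite at time $0$: with $h^\pi(0,0)=0$ one has $D(0,z)=h^\pi(0,z)$, whose $L^1$ norm is finite at each fixed $z$ but grows without bound as $|z|\to\infty$ (indeed it should grow at least like $|z|^{1/3}$ if KPZ scaling holds). By finite speed of propagation, after time $t$ the value $|D(t,0)|$ can be as large as $\max_{|w|\lesssim t}|D(0,w)|$, so neither $|D(t,0)|$ nor $|D(t,0)-D(t,x)|$ is uniformly $L^1$-bounded by this route. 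Your phrase ``finite $L^1$ norm of $D(0,\cdot)$'' conflates pointwise and uniform finiteness. Moreover, the decomposition is circular: $D(t,0)-D(t,x)=[h^\pi(t,0)-h^\pi(t,x)]-[h(t,0)-h(t,x)]$, so bounding it is equivalent to the tightness you are trying to prove.

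The paper avoids this entirely. It does \emph{not} invoke the stationary distribution from Theorem~\ref{mainthm}; instead it reuses the quantitative estimate that underlies Theorem~\ref{mainthm}, namely
\[
\int_0^t \ee|h(s,e_i)-h(s,0)|\,ds \le C(d)\,t,
\]
and converts this time-averaged bound into a bound at an exponential random time by integration by parts:
\[
\ee|w(T,e_i)| = \int_0^\infty a^{-1}e^{-t/a}\,\ee|h(t,e_i)-h(t,0)|\,dt
= \int_0^\infty a^{-2}e^{-t/a}\biggl(\int_0^t\ee|h(s,e_i)-h(s,0)|\,ds\biggr)dt \le C(d),
\]
where $T\sim\mathrm{Exp}(1/a)$. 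This gives tightness of $\{w(T_n,\cdot)\}$ directly, after which your steps (a) and (c) complete the proof.
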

The above theorem allows us to generate approximately from a stationary distribution of $\{h(t,\cdot)-h(t,0)\}_{t\ge 0}$ by choosing some large $N$ and some $p$ such that $N^{-d-1}\ll p\ll N^{-d}$, and then generating the surface $u_{p}$ as above. The results from a simulation with $d=1$, $N =1000$ and $p=0.0001$ are shown in Figure \ref{simulfig}.

\begin{figure}[t]
\centering
\begin{subfigure}{.65\textwidth}
\includegraphics[width = \textwidth]{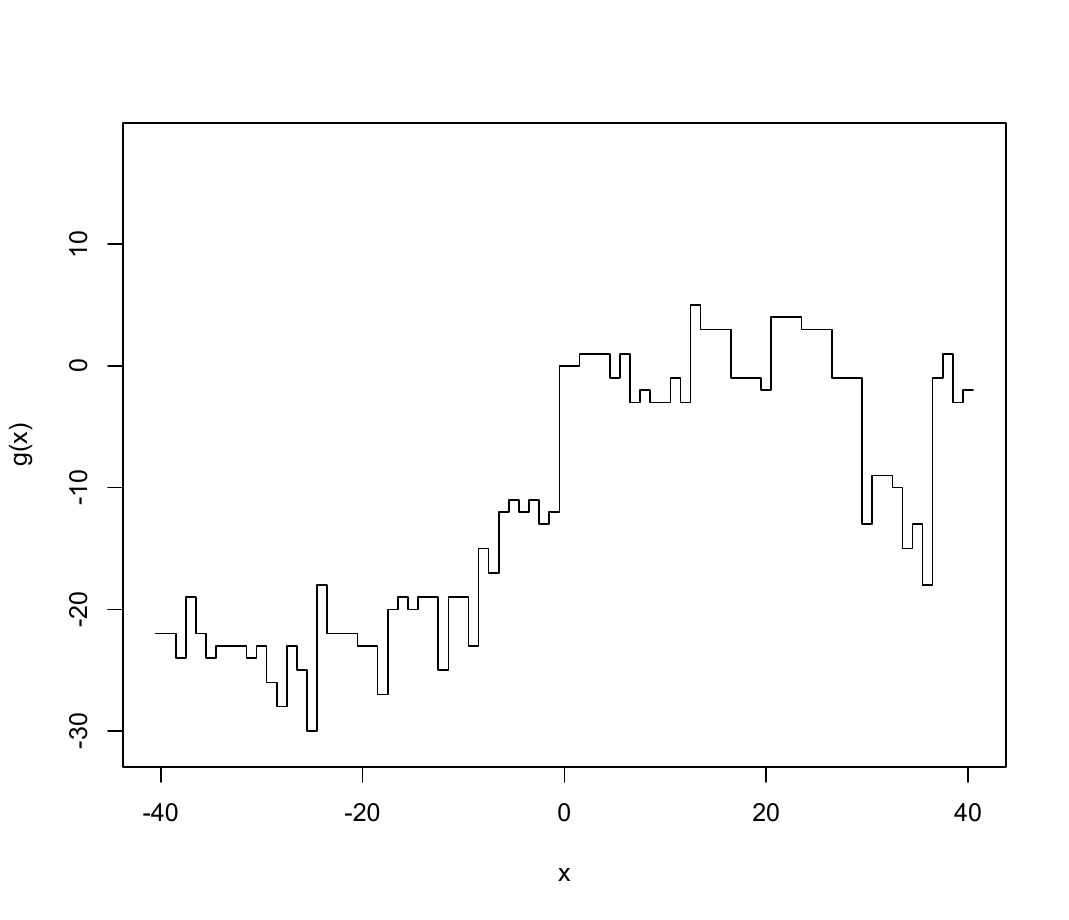}
\caption{Height function}
\end{subfigure}
\begin{subfigure}{.65\textwidth}
\includegraphics[width = \textwidth]{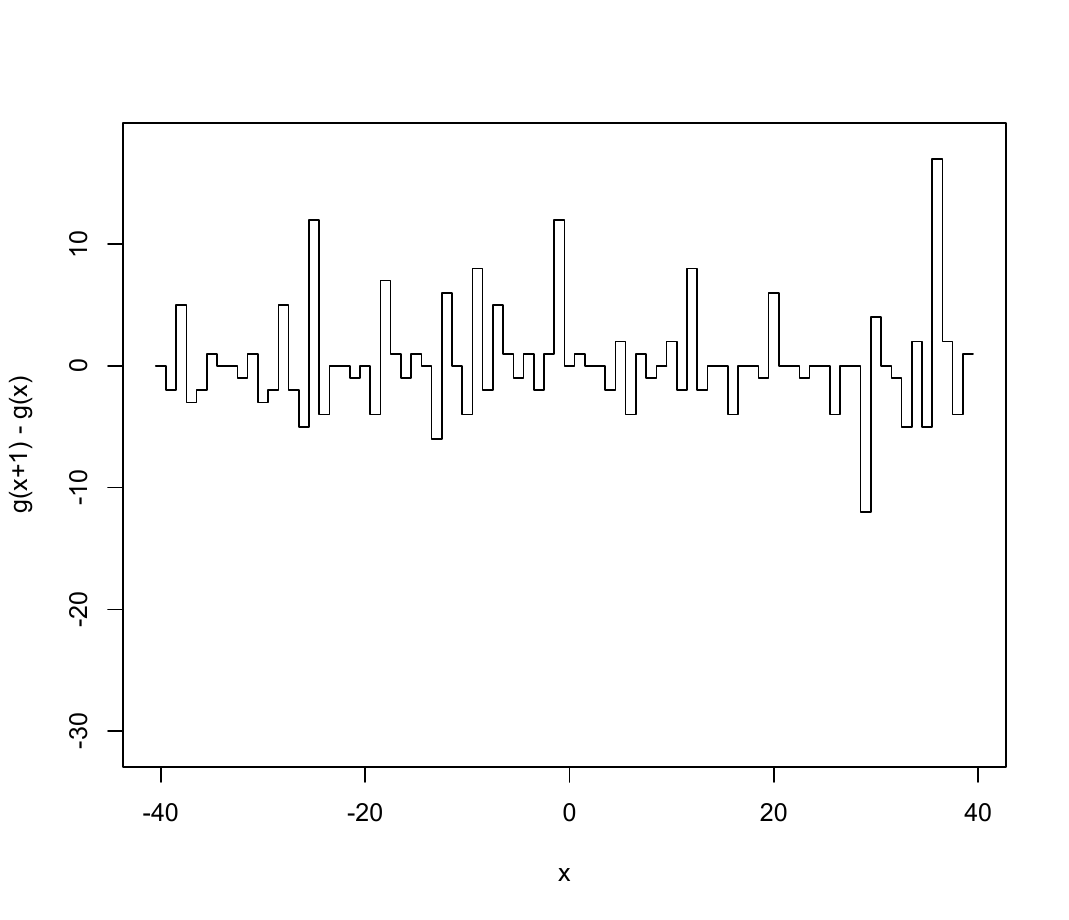}
\caption{Gradient}
\end{subfigure}
\caption{An approximately stationary ballistic deposition surface $g$, generated by the algorithm from Theorem \ref{simulthm}, with $d=1$, $N=1000$, and $p = 0.0001$. For visual clarity, the height function and the gradient are plotted in the range $-40\le x\le 40$.\label{simulfig}}
 \end{figure}

\begin{remark}
Note that the process considered in Theorem \ref{simulthm} is a discrete-time process, which scales up the continuous time of Theorem \ref{mainthm} by a factor of $N^d$. That is, a unit interval of time in the continuous process corresponds to roughly $N^d$ updates in the discrete process. In Theorem \ref{simulthm}, when $n$ is generated from a geometric distribution with mean $1/p$, it corresponds to the continuous-time process at time of order $1/(pN^d)$. So, the condition $p\ll N^{-d}$ corresponds to the large-time asymptotics of the continuous process. Additionally, we need the condition $p\gg N^{-d-1}$ to ensure that $1/(pN^d) \ll N$, which is needed because the effect of the boundary in the continuous-time process in the box $B_N$ is not felt at the center before time $N$. In other words, to make the discrete process on $B_N$ behave like the continuous process on $\zz^d$ far away from the boundary of $B_N$, we need to restrict ourselves to discrete times $\ll N^{d+1}$.
\end{remark}

\subsection{Review of the literature}\label{litsec}
The interest in ballistic deposition stems partly from the fact that it is viewed as a toy model of diffusion limited aggregation~\cite{ataretal01}. Another reason why models such as ballistic deposition have attracted a lot of current interest is that they are believed to be in the Kardar--Parisi--Zhang (KPZ) universality class~\cite{quastel12, corwin16, cannizzarohairer20}. While some of these models are integrable, most, like ballistic deposition, remain mathematically intractable. 

Rigorous mathematical results about ballistic deposition are so few that they can be summarized in one paragraph. A strong law of large numbers for the height function of a variant of ballistic deposition on a finite one-dimensional strip was proved by~\citet{ataretal01}. \citet{seppalainen00} proved the general strong law of large numbers for ballistic deposition on $\zz^d$ for all $d$. \citet{penroseyukich01, penroseyukich02} proved central limit theorems and other results for the number of blocks deposited in a large region within a fixed time. \citet{penrose08} showed that when $d=1$, the fluctuations of $h(t,x)$ are at least of order $\sqrt{\log t}$ as $t\to\infty$. Refinements and extensions of the results of \citet{ataretal01} for ballistic deposition on  finite graphs were recently proved by~\citet{mansouretal19} and \citet{braun20}. A variant of ballistic deposition, called $0$-ballistic deposition, where the height at a location $x$ is updated to a randomly selected element of $\{h(t, x\pm e_i), i=1,\ldots,d\}\cup \{h(t,x)+1\}$, was recently analyzed in great depth by \citet{cannizzarohairer20}. Lastly, in another recent work~\cite{chatterjee21b}, a bound of order $\sqrt{t/\log t}$ on the fluctuations of $h(t,x)$ was established for a variant of ballistic deposition where all heights are updated simultaneously at integer times, but the block sizes are random. This is more or less the totality of all that has been rigorously proved about ballistic deposition until now. 

It is natural to investigate the existence of a stationary probability measure for ballistic deposition (or, more accurately, for the gradient process). Indeed, ballistic deposition is an interacting particle system, and the major portion of the classical literature on interacting particle systems studies the existence and properties of invariant distributions~\cite{liggett85}. The question is challenging for interface growth models like ballistic deposition because the classical techniques are designed mainly for processes that take value in a product of compact spaces, which is not the case for interface growth. Non-compact systems are amenable to analysis when they are linear, as in \cite[Chapter IX]{liggett85}, but ballistic deposition is highly nonlinear. More advanced modern methods (see~\cite{komorowskietal10} and references therein) also do not seem to be applicable for ballistic deposition.

Besides being a natural question from the point of view of interacting particle systems, a second motivation for this work comes from the slew of recent papers on the existence of stationary evolutions of the KPZ and related equations~\cite{bakhtinli19, dunlapetal21, dunlap20, corwinknizel21, bryckuznetsov21, brycetal21, barraquandledoussal21, pimentel18, pimentel21a, pimentel21b, yang21}. Although these results are about stochastic PDEs, and Theorem \ref{mainthm} is about a discrete process, there are some similarities at a conceptual level. The fact that ballistic deposition is conjectured to be in the KPZ universality class~\cite{pagnaniparisi15} provides a natural connection with the SPDE literature. In dimension one, a possible rigorous formulation of this conjecture is that under the KPZ scaling, the height function of stationary 1D ballistic deposition converges to that of the KPZ fixed point characterized by~\citet{matetskietal21}. 

\subsection{The question of uniqueness}\label{unique}
We make the following conjecture, and then present some evidences in favor of it. 
\begin{conj}\label{mainconj}
The Markov process $\{w(t,\cdot)\}_{t\ge 0}$ from Theorem \ref{mainthm} has multiple invariant probability measures.
\end{conj}
Recall that a Markov semigroup $\{\cp_t\}_{t\ge 0}$ is said to have the {\it Feller property} if for any bounded continuous function $f$ from the state space into the real line, and any $t>0$, $\cp_t f$ is also a bounded continuous function. Under some mild conditions, the Feller property implies the existence of invariant measures. On the other hand, $\{\cp_t\}_{t\ge 0}$ is said to have the {\it strong Feller property} if for any $t>0$, $\cp_t$ sends bounded measurable functions to bounded continuous functions. The strong Feller property, under some additional assumptions, implies that there is a unique invariant measure.  We have the following result, which is our first piece of evidence in support of Conjecture \ref{mainconj}.
\begin{thm}\label{fellerthm}
The Markov process $\{w(t,\cdot)\}_{t\ge 0}$ from Theorem \ref{mainthm} has the Feller property, but is not strongly Feller.
\end{thm}
The next piece of evidence is the following result, which gives a believable condition, which, if true, would imply the existence of multiple invariant probability measures for the process $\{w(t,\cdot)\}_{t\ge 0}$.
\begin{thm}\label{uithm}
Consider process $\{w(t,\cdot)\}_{t\ge 0}$ from Theorem \ref{mainthm} with initial state $w(0,x) = x_1$, where $x_1$ is the first coordinate of the vector $x$. With this initial state, the process satisfies
\begin{align}\label{newavg}
\sup_{T\ge 1}\frac{1}{T}\int_0^T \ee|w(t,e_1)| dt< \infty.
\end{align}
If it is further true that 
\begin{align}\label{newavg2}
\lim_{R\to\infty} \sup_{T\ge 1}\frac{1}{T}\int_0^T \ee(|w(t,e_1)| 1_{\{|w(t,e_1)| \ge R\}})dt = 0,
\end{align}
then there exist more than one invariant probability measure for the Markov process $\{w(t,\cdot)\}_{t\ge 0}$.
\end{thm}
Note that another way to state the unproven condition \eqref{newavg2} is that if $U(T)$ is drawn uniformly at random from the interval $[0,T]$, then the collection of random variables $\{w(U(T), e_1)\}_{T\ge 1}$ is uniformly integrable. The proven condition \eqref{newavg} says that this collection has uniformly bounded $L^1$ norm.

\subsection{Other open questions}\label{opensec}
Besides the question of uniqueness discussed in the previous subsection, Theorem \ref{mainthm} suggests many other follow-up questions, none of which seem easy to answer. The following is a partial list.
\begin{enumerate}
\item Can something be said about the rate of convergence to stationarity? 
\item Can at least one invariant measure be explicitly described, even for $d=1$? 
\item If $g$ is a random interface drawn from an invariant measure, what can be said about the rate of growth of $|g(x)|$ as $|x|\to \infty$? 
\item If $g$ is drawn from an invariant measure, does $g$ have a nontrivial scaling limit as the lattice spacing is sent to zero and the heights are scaled suitably? In particular, in dimension one, is it true that under the KPZ scaling, the process converges to the KPZ fixed point characterized by~\citet{matetskietal21}? 
\end{enumerate}
This concludes the discussion in this introductory section, and the statements of results and open questions. The rest of the paper is devoted to proofs. 


\section{Proof of Theorem \ref{mainthm}}\label{proofsec}
The proof of Theorem \ref{mainthm} is inspired by the methods of \cite{chatterjee21c} (see also the related works~\cite{chatterjee21, chatterjee21b, chatterjee21b1, chatterjeesouganidis21}). A key ingredient in the proof is the following simple lemma about real numbers. 
\begin{lmm}\label{mainlmm}
For any $n\ge 2$ and real numbers $x_1,\ldots,x_n$, 
\[
\max_{1\le i\le n} x_i - \frac{1}{n}\sum_{i=1}^n x_i \ge \frac{1}{2n(n-1)} \sum_{1\le i< j\le n} |x_i - x_j|. 
\]
\end{lmm}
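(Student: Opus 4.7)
The plan is to reduce the right-hand side to an expression involving the gaps $M - x_i$, where $M := \max_{1 \le i \le n} x_i$, and then use a pointwise bound on the pairwise differences in terms of these gaps.

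First I would rewrite the left-hand side in the symmetric form
\[
\max_{1\le i\le n} x_i - \frac{1}{n}\sum_{i=1}^n x_i = \frac{1}{n}\sum_{i=1}^n (M - x_i),
\]
so that the left-hand side is, up to the factor $1/n$, the total deficit of the $x_i$'s from the maximum $M$. This is the natural quantity against which one can try to compare pairwise differences.

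Next, for each pair $i < j$, the plan is to observe the elementary bound
\[
|x_i - x_j| \le (M - x_i) + (M - x_j),
\]
which is immediate because $|x_i - x_j| = \max(x_i,x_j) - \min(x_i,x_j) \le M - \min(x_i,x_j)$, and the latter is bounded by $(M-x_i)+(M-x_j)$ since $M \ge \max(x_i,x_j)$. Summing this inequality over all $\binom{n}{2}$ pairs contributes each index $i$ exactly $n-1$ times on the right, giving
\[
\sum_{1\le i<j\le n} |x_i - x_j| \le (n-1)\sum_{i=1}^n (M - x_i) = n(n-1)\Bigl(\max_{1\le i\le n} x_i - \frac{1}{n}\sum_{i=1}^n x_i\Bigr),
\]
which yields even the stronger bound with constant $1/[n(n-1)]$, hence in particular the claimed $1/[2n(n-1)]$.

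There is essentially no main obstacle here: the entire argument is one symmetric rewrite plus a one-line pointwise estimate. The only remark worth making is that the factor $1/2$ in the stated inequality is not tight for this approach; the author likely includes it either because later applications use the weaker constant, or because a different proof route (e.g.\ splitting each $|x_i - x_j|$ into $\tfrac{1}{2}[(M-x_i)+(M-x_j)]+\tfrac{1}{2}[x_i+x_j - 2m]$ with $m = \min x_k$, balancing contributions from above and below) is the intended one. For our purposes, the direct bound above suffices.
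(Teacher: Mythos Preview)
Your argument is correct and in fact yields the sharper inequality with constant $1/[n(n-1)]$ rather than $1/[2n(n-1)]$. It is, however, a genuinely different route from the paper's. The paper centers at the \emph{mean}: setting $y_i := x_i - \bar{x}$, it observes $\max_i y_i \ge \frac{1}{n}\sum_i y_i^+$, then uses $\sum_i y_i = 0$ to write $\sum_i y_i^+ = \frac{1}{2}\sum_i |y_i|$, and finally bounds $|x_i - x_j| = |y_i - y_j| \le |y_i| + |y_j|$. The factor $1/2$ enters precisely at the positive/negative-part splitting step. Your approach instead centers at the \emph{maximum}: since every deficit $M - x_i$ is already nonnegative, you get the exact identity $M - \bar{x} = \frac{1}{n}\sum_i (M-x_i)$ with no loss, and the same triangle-inequality step $|x_i - x_j| \le (M-x_i)+(M-x_j)$ then delivers the bound directly. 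Your method is shorter and gives a tighter constant; the paper's method has the minor conceptual advantage that it isolates the intermediate inequality $\max_i y_i \ge \frac{1}{2n}\sum_i |y_i|$ for centered sequences, but that refinement is not used elsewhere in the paper.
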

\begin{proof}
Let $x^+ := \max\{x,0\}$ and $x^-:= -\min\{x,0\}$ denote the positive  and negative parts of a real number $x$. Note that $x=x^+ - x^-$ and $|x|=x^+ + x^-$. For each $i$, let 
\[
y_i := x_i - \frac{1}{n}\sum_{j=1}^n x_j,
\]
so that
\begin{align}\label{step1}
\max_{1\le i\le n} x_i - \frac{1}{n}\sum_{i=1}^n x_i = \max_{1\le i\le n } y_i. 
\end{align}
Since $\sum_{i=1}^n y_i = 0$, at least one $y_i$ must be nonnegative. Thus,
\begin{align*}
 \max_{1\le i\le n } y_i &=  \max_{1\le i\le n } y_i^+ \ge \frac{1}{n}\sum_{i=1}^n y_i^+. 
\end{align*}
But again, since $\sum_{i=1}^n y_i = 0$,
\[
\sum_{i=1}^n y_i^+ = \sum_{i=1}^n y_i^-.
\]
Combining the last two displays, we get
\begin{align}\label{step2}
 \max_{1\le i\le n } y_i  &\ge \frac{1}{2n}\sum_{i=1}^n (y_i^+ + y_i^-) = \frac{1}{2n}\sum_{i=1}^n |y_i|. 
\end{align}
Now, for any $i$ and $j$,
\begin{align*}
|x_i - x_j| &= |y_i - y_j| \le |y_i| + |y_j|. 
\end{align*}
Thus,
\begin{align*}
\sum_{1\le i\ne j\le n} |x_i - x_j| &\le \sum_{1\le i\ne j\le n} (|y_i|+|y_j|) = 2(n-1) \sum_{i=1}^n |y_i|. 
\end{align*}
Combining this with \eqref{step1} and \eqref{step2} completes the proof. 
\end{proof}
Let $h(t,x)$ denote the height of the ballistic deposition surface at time $t$ and location $x$, starting with $h(0,x)=0$ for all $x$. Clearly, the law of $h(t,\cdot)$ is translation-invariant. In particular,
\[
\alpha(t) := \ee(h(t,x))
\]
does not depend on $x$. 
The following lemma follows from \cite[Proposition 2.1]{penrose08}.
\begin{lmm}[See Proposition 2.1 in \cite{penrose08}]\label{penroselmm}
There is a constant $C(d)$ depending only on $d$, such that for all $t$, $\alpha(t)\le C(d)t$. 
\end{lmm}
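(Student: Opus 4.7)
The plan is to dominate the ballistic deposition height $h(t,x)$ by a last-passage quantity $L(t,x)$ defined directly on the Poisson clock-ring data, and then bound $\ee[L(t,x)]$ by a first-moment union bound in which the strict time-ordering along a chain produces a crucial factor of $1/k!$.

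Define $L(t,x)$ to be the maximum length $k$ of a chain of Poisson clock rings $(x_1,s_1),\ldots,(x_k,s_k)$ with $0<s_1<\cdots<s_k\le t$, with $x_k=x$, and with $x_{i+1}\in\{x_i\}\cup\{x_i\pm e_j:1\le j\le d\}$ for every $i$. Restricting all dynamics to a finite box $B\ni x$, I would verify by induction on the (now finite) ordered sequence of ring events inside $B\times[0,t]$ that the truncated heights satisfy $h^B\le L^B$: when a ring fires at site $y$ at time $s$, the update rule $h^B(s,y)=\max(h^B(s^-,y)+1,\max_{z\sim y}h^B(s^-,z))$ is dominated by $L^B(s,y)=1+\max(L^B(s^-,y),\max_{z\sim y}L^B(s^-,z))$, since the new ring can extend any chain ending at $y$ or a neighbor of $y$ at an earlier time. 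Sending $B\uparrow\zz^d$ and invoking the infinite-volume construction of \cite{penrose08} as a monotone limit then yields $h(t,x)\le L(t,x)$ almost surely.

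For the moment estimate, a union bound over chains gives
\[
\pp\bigl(L(t,x)\ge k\bigr)\le(2d+1)^{k-1}\cdot\frac{t^k}{k!},
\]
where the factor $(2d+1)^{k-1}$ counts admissible site sequences $(x_1,\ldots,x_k=x)$, and $t^k/k!$ is the expected number of ordered time sequences with one ring at each $x_i$, computed via Campbell's formula for independent Poisson processes (and still valid when the site sequence has repetitions, by the Palm-type identity). Writing $\ee[L(t,x)]=\sum_{k\ge1}\pp(L(t,x)\ge k)\le\sum_{k\ge1}\min\bigl(1,(2d+1)^{k-1}t^k/k!\bigr)$ and splitting at $k^\star:=\lceil e(2d+1)t\rceil$, one bounds the first $k^\star$ terms trivially by $1$ (contributing $O(t)$) and uses Stirling's inequality $k!\ge(k/e)^k$ to control each summand for $k>k^\star$ by $((2d+1)et/k)^k<1$, which decays geometrically and contributes $O(1)$. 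Therefore $\alpha(t)=\ee[h(t,x)]\le\ee[L(t,x)]\le C(d)\,t$ with $C(d)=e(2d+1)+O(1)$.

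The main conceptual step is identifying $L(t,x)$ as the right upper bound and recognizing that the strict time-ordering along a chain produces the $1/k!$ factor that turns an exponential-in-$t$ naive bound into a linear one; a brute-force estimate counting all rings in the light cone of $(x,t)$ would only yield $O(t^{d+1})$, which is far too weak. The induction $h^B\le L^B$ and the Campbell computation for repeated sites are routine once the definitions are in place.
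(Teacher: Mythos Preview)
Your argument is correct. The paper does not supply its own proof of this lemma --- it simply cites \cite[Proposition~2.1]{penrose08} --- so there is nothing in the paper to compare against; your last-passage domination $h\le L$ combined with the union bound $\pp(L(t,x)\ge k)\le(2d+1)^{k-1}t^k/k!$ is the standard route to this linear growth estimate and is essentially what one finds in the cited reference. One small remark: the claim that the tail beyond $k^\star=\lceil e(2d+1)t\rceil$ contributes $O(1)$ is justified more cleanly by noting that the ratio of consecutive summands is $(2d+1)t/(k+1)\le 1/e$ for $k\ge k^\star$, giving the geometric decay directly.
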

Define another function
\[
\beta(t) := \ee\biggl[\max\biggl\{\max_{1\le i\le d} h(t,\pm e_i), \, h(t,0)\biggr\}\biggr]. 
\]
The following lemma is a consequence of the definition of ballistic deposition. 
\begin{lmm}\label{derivlmm}
For any $t,\delta\ge 0$, 
\begin{align*}
\alpha(t+\delta)-\alpha(t) \ge\delta e^{-(2d+1)\delta}(\beta(t)-\alpha(t)).
\end{align*}
\end{lmm}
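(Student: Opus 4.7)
The plan is to isolate a small but informative event during the interval $[t,t+\delta]$ on which the height at the origin is forced up to (at least) the current neighborhood maximum, and to use the obvious monotonicity of $h(\cdot,0)$ in time everywhere else. Let $A$ denote the event that the Poisson clock at the origin rings exactly once in $(t,t+\delta]$ and that none of the $2d$ clocks at $\pm e_i$ rings in $(t,t+\delta]$. By independence of the Poisson clocks at different sites,
\begin{align*}
\pp(A) \;=\; (\delta e^{-\delta})\cdot e^{-2d\delta} \;=\; \delta e^{-(2d+1)\delta}.
\end{align*}
Moreover, since $A$ is determined by clock rings after time $t$ and $h(t,\cdot)$ is determined by clock rings up to time $t$, the event $A$ is independent of the configuration $h(t,\cdot)$.

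On $A$, let $s\in(t,t+\delta]$ be the unique firing time of the origin clock. Because no neighbor clock rings in $(t,s]$ and $h(\cdot,0)$ changes only when the origin clock rings, we have $h(s^-,0)=h(t,0)$ and $h(s^-,\pm e_i)=h(t,\pm e_i)$; the deposition update at time $s$ therefore gives
\begin{align*}
h(t+\delta,0) \;=\; h(s,0) \;=\; \max\Bigl\{\max_{1\le i\le d} h(t,\pm e_i),\; h(t,0)+1\Bigr\} \;\ge\; \max\Bigl\{\max_{1\le i\le d} h(t,\pm e_i),\; h(t,0)\Bigr\}.
\end{align*}
Off the event $A$, I would simply use the pointwise monotonicity $h(t+\delta,0)\ge h(t,0)$, which is immediate from the update rule (heights only ever increase in time). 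Taking expectations, using the independence of $A$ and $h(t,\cdot)$, and recalling the definitions of $\alpha$ and $\beta$, one obtains
\begin{align*}
\alpha(t+\delta) \;\ge\; \pp(A)\,\beta(t) + (1-\pp(A))\,\alpha(t) \;=\; \alpha(t) + \delta e^{-(2d+1)\delta}\bigl(\beta(t)-\alpha(t)\bigr),
\end{align*}
which rearranges to the claim.

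The only point requiring any care is the independence claim, which rests on the independent-increments property of the driving Poisson processes and the fact that $h(t,\cdot)$ is adapted to the $\sigma$-algebra generated by clock rings in $[0,t]$. Beyond that, the argument consists of one probability computation and a careful reading of the update rule, so I do not anticipate any substantive obstacle.
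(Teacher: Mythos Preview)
Your proof is correct and follows essentially the same approach as the paper: isolate the event of exactly one ring at the origin and no rings at neighbors during $(t,t+\delta]$, use monotonicity on the complement, and exploit the independence of this event from $\mf_t$ (the paper phrases this via conditional expectation $\pp(E\mid\mf_t)$, you via independence of $A$ and $h(t,\cdot)$, but these are the same observation). The only cosmetic difference is that the paper keeps the $+1$ in the max a step longer before dropping it, whereas you drop it immediately; the resulting bound is identical.
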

\begin{proof}
Suppose that in the time interval $[t,t+\delta)$, there is exactly one deposition at $0$, and none at any of the neighbors of $0$. Let $E$ denote this event. If this event takes place, then
\[
h(t+\delta,0) = \max\biggl\{\max_{1\le i\le d} h(t,\pm e_i), \, h(t,0)+1\biggr\}. 
\]
Thus, if $\mf_t$ denotes the $\sigma$-algebra generated by all updates in the time interval $[0,t)$, then by the monotonicity of $h(t,0)$, 
\begin{align*}
&\ee(h(t+\delta,0)-h(t,0)) \\
&\ge \ee((h(t+\delta,0)-h(t,0))1_E\mid \mf_t)\\
&= \ee\biggl(\max\biggl\{\max_{1\le i\le d} (h(t,\pm e_i)- h(t,0)), \, 1\biggr\}1_E\, \biggl|\,  \mf_t\biggr)\\
&= \max\biggl\{\max_{1\le i\le d} (h(t,\pm e_i)- h(t,0)), \, 1\biggr\}\pp(E\mid\mf_t),
\end{align*}
where the last step holds because the term that was taken out of the conditional expectation is $\mf_t$-measurable and  integrable  (by Lemma \ref{penroselmm}). Noting that 
\[
\pp(E\mid \mf_t) = \delta e^{-(2d+1)\delta},
\]
and taking expectation on  both sides in the previous display, we get
\begin{align*}
&\ee(h(t+\delta,0)-h(t,0)) \\
&\ge \delta e^{-(2d+1)\delta}\ee\biggl(\max\biggl\{\max_{1\le i\le d} (h(t,\pm e_i)- h(t,0)), \, 1\biggr\}\biggr)\\
&= \delta e^{-(2d+1)\delta}\biggl[\ee\biggl(\max\biggl\{\max_{1\le i\le d}h(t,\pm e_i), \, h(t,0) + 1\biggr\} - h(t,0)\biggr)\biggr]\\
&\ge \delta e^{-(2d+1)\delta}(\beta(t)-\alpha(t)).
\end{align*}
This completes the proof of the lemma.
\end{proof}
We need the following standard result about Riemann integrability of bounded monotone functions. The proof is omitted.
\begin{lmm}\label{intlmm}
Take any $t\ge 0$, and a bounded monotone function $f:[0,t]\to \rr$. Then 
\begin{align*}
\lim_{n\to\infty} \frac{t}{n}\sum_{i=0}^{n-1} f(it/n) = \int_0^t f(s)ds.
\end{align*}
\end{lmm}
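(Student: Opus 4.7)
The plan is to reduce to the non-decreasing case by symmetry (replacing $f$ by $-f$ if necessary), and then to sandwich the Riemann integral between the left-endpoint and right-endpoint Riemann sums, whose difference telescopes to $\frac{t}{n}(f(t)-f(0))$.

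Concretely, assume without loss of generality that $f$ is non-decreasing. Since a bounded monotone function on $[0,t]$ has at most countably many discontinuities, it is Riemann (and Lebesgue) integrable, so $\int_0^t f(s)\,ds$ is well-defined. For each $i \in \{0,\ldots,n-1\}$, monotonicity of $f$ on the interval $[it/n, (i+1)t/n]$ yields
\[
\frac{t}{n} f(it/n) \le \int_{it/n}^{(i+1)t/n} f(s)\,ds \le \frac{t}{n} f((i+1)t/n).
\]
Summing over $i$ and using the telescoping identity $\sum_{i=0}^{n-1} f((i+1)t/n) = \sum_{i=0}^{n-1} f(it/n) + f(t) - f(0)$ gives
\[
\frac{t}{n}\sum_{i=0}^{n-1} f(it/n) \le \int_0^t f(s)\,ds \le \frac{t}{n}\sum_{i=0}^{n-1} f(it/n) + \frac{t}{n}(f(t) - f(0)).
\]
The discrepancy $\frac{t}{n}(f(t)-f(0))$ tends to $0$ as $n\to\infty$ since $f$ is bounded, and the conclusion follows by squeezing.

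There is no significant obstacle here — the only subtlety is knowing that a bounded monotone function is integrable, which is classical. The argument goes through verbatim for non-increasing $f$ with the inequalities reversed, and applying the non-decreasing case to $-f$ covers that situation immediately.
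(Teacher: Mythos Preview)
Your proof is correct. The paper takes a different, measure-theoretic route: it defines the step function $f_n(s) := f(it/n)$ on each interval $[it/n,(i+1)t/n)$, notes that $f_n \to f$ pointwise almost everywhere (since a monotone $f$ has at most countably many discontinuities), rewrites the Riemann sum as $\int_0^t f_n(s)\,ds$, and applies the bounded convergence theorem. Your sandwich argument is more elementary and even yields the explicit error bound $\tfrac{t}{n}|f(t)-f(0)|$; the paper's argument gives no rate but would apply unchanged to any bounded, almost-everywhere continuous integrand.
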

Combining Lemma \ref{derivlmm} and Lemma \ref{intlmm} yields the following result. This result is conceptually similar to  \cite[Proposition 3.1]{dunlap20}, \cite[Proposition 5.2]{dunlapetal21} and \cite[Lemma 4.3]{chatterjee21c}, which were used to construct  stationary solutions of the stochastic Burgers equation.
\begin{lmm}\label{integratelmm}
For any $t\ge 0$,
\begin{align*}
\alpha(t) \ge \int_0^t (\beta(s)-\alpha(s))ds. 
\end{align*}
\end{lmm}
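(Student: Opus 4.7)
The plan is to discretize the interval $[0,t]$, apply Lemma \ref{derivlmm} on each subinterval, and then pass to a Riemann integral via Lemma \ref{intlmm}.

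Concretely, fix $n\ge 1$ and let $\delta = t/n$. Telescoping and using $\alpha(0)=0$, I would write
\[
\alpha(t) = \sum_{i=0}^{n-1}\bigl(\alpha((i+1)t/n) - \alpha(it/n)\bigr),
\]
and then bound each summand below using Lemma \ref{derivlmm} with $\delta = t/n$, obtaining
\[
\alpha(t) \ge \frac{t}{n}\, e^{-(2d+1)t/n} \sum_{i=0}^{n-1}\bigl(\beta(it/n) - \alpha(it/n)\bigr).
\]

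Next I would send $n\to\infty$. The prefactor $e^{-(2d+1)t/n}$ tends to $1$, so everything reduces to showing
\[
\lim_{n\to\infty}\frac{t}{n}\sum_{i=0}^{n-1}\bigl(\beta(it/n)-\alpha(it/n)\bigr) = \int_0^t(\beta(s)-\alpha(s))\,ds.
\]
Lemma \ref{intlmm} gives this for each of the two summands separately, provided $\alpha$ and $\beta$ are individually bounded and monotone on $[0,t]$. Monotonicity is immediate: both $h(s,0)$ and $\max\{\max_i h(s,\pm e_i), h(s,0)\}$ are nondecreasing in $s$ pathwise, so the expectations $\alpha$ and $\beta$ are nondecreasing. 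Boundedness of $\alpha$ on $[0,t]$ is exactly Lemma \ref{penroselmm}, and boundedness of $\beta$ follows from the crude estimate $\beta(s)\le \sum_{i=1}^d \ee[h(s,e_i)+h(s,-e_i)] + \ee[h(s,0)] = (2d+1)\alpha(s)\le (2d+1)C(d)t$ by translation invariance.

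I do not expect any real obstacle here, since the structure is the same as in \cite[Proposition 3.1]{dunlap20} and the related references the author cites: the key one-step inequality is Lemma \ref{derivlmm}, and Lemma \ref{intlmm} was set up precisely to handle the Riemann-sum limit for monotone (not necessarily continuous) $\alpha$ and $\beta$. The only mildly subtle point is that $\beta-\alpha$ need not itself be monotone, which is why I would apply Lemma \ref{intlmm} to $\alpha$ and $\beta$ separately rather than to their difference.
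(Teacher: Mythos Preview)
Your proposal is correct and follows essentially the same approach as the paper's proof: telescope, apply Lemma~\ref{derivlmm} on each subinterval, then pass to the limit via Lemma~\ref{intlmm} using monotonicity and boundedness of $\alpha$ and $\beta$. Your write-up is in fact slightly more detailed than the paper's, which simply asserts boundedness ``by Lemma~\ref{penroselmm}'' without spelling out the bound $\beta(s)\le (2d+1)\alpha(s)$, and your remark that Lemma~\ref{intlmm} should be applied to $\alpha$ and $\beta$ separately (rather than to $\beta-\alpha$) is a useful clarification.
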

\begin{proof}
Fix $t$ and take any $n$. Then by Lemma \ref{derivlmm} and the fact that $\alpha(0)=0$, 
\begin{align*}
\alpha(t) &= \alpha(t)-\alpha(0)\\
&= \sum_{i=0}^{n-1} (\alpha((i+1)t/n)  - \alpha(it/n))\\
&\ge \frac{t}{n}e^{-(2d+1)t/n} \sum_{i=0}^{n-1} (\beta(it/n) -\alpha(it/n)). 
\end{align*}
Since $\alpha$ and $\beta$ are monotone functions, and are bounded on any finite interval (by Lemma \ref{penroselmm}), the proof is now completed by sending $n\to \infty$ and applying Lemma~\ref{intlmm}.
\end{proof}
Combining Lemma \ref{mainlmm} and Lemma \ref{integratelmm}, we arrive at the following key result. 
\begin{lmm}\label{bdlmm}
For all $t\ge0$, and any $i$,
\[
\int_0^t \ee|h(s,e_i)-h(s,0)| ds \le C(d) t, 
\]
where $C(d)$ depends only on $d$.
\end{lmm}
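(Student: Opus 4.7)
The plan is to split the two-time increment $h(s,e_i) - h(t,0)$ into a same-time spatial increment at time $s$ plus a purely temporal increment at the origin between times $s$ and $t$, control the spatial part by combining Lemmas \ref{mainlmm} and \ref{integratelmm} via translation invariance of the law of $h(s,\cdot)$, and control the temporal part using Lemma \ref{penroselmm}.

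Concretely, monotonicity of $h(\cdot,0)$ in time gives $h(t,0)\ge h(s,0)$ almost surely, so the triangle inequality yields
\[
\ee|h(s,e_i)-h(t,0)| \le \ee|h(s,e_i)-h(s,0)| + \bigl(\alpha(t)-\alpha(s)\bigr).
\]
For the first term I would apply Lemma \ref{mainlmm} with $n=2d+1$ to the random variables $h(s,0),\, h(s,\pm e_1),\,\ldots,\, h(s,\pm e_d)$. By translation invariance each of these has mean $\alpha(s)$, so the expectation of their average equals $\alpha(s)$ while the expectation of their maximum equals $\beta(s)$. Taking expectations in Lemma \ref{mainlmm} and retaining only the pair $\{0,e_i\}$ on the right-hand side gives $\ee|h(s,e_i)-h(s,0)|\le 2(2d+1)(2d)\bigl(\beta(s)-\alpha(s)\bigr)$. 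Integrating in $s$ over $[0,t]$ and invoking Lemma \ref{integratelmm} followed by Lemma \ref{penroselmm} then produces
\[
\int_0^t \ee|h(s,e_i)-h(s,0)|\,ds \le 2(2d+1)(2d)\,\alpha(t) \le C(d)\,t.
\]

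For the temporal part, Fubini and the monotonicity of $\alpha$ give $\int_0^t (\alpha(t)-\alpha(s))\,ds = t\alpha(t)-\int_0^t \alpha(s)\,ds \le t\alpha(t) \le C(d)\,t^2$, and combining with the previous display yields an overall bound of order $C(d)(t+t^2)$.

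The main obstacle is exactly this last step: the naive triangle inequality produces a quadratic-in-$t$ correction from $\alpha(t)-\alpha(s)$, whereas the statement asserts a linear bound $C(d)\,t$. I do not see how to remove this quadratic residual by soft manipulations; even the one-sided split $|h(s,e_i)-h(t,0)| = (h(s,e_i)-h(t,0))^+ + (h(t,0)-h(s,e_i))^+$, although it handles $(h(s,e_i)-h(t,0))^+ \le (h(s,e_i)-h(s,0))^+$ using monotonicity, still leaves an $(\alpha(t)-\alpha(s))$ residual on the other side. I therefore expect that in the downstream argument the lemma is invoked in its same-time form, namely with $h(s,0)$ in place of $h(t,0)$, for which the first two paragraphs deliver the linear bound $C(d)\,t$ with constant $C(d) = 2(2d+1)(2d)\cdot C_{\mathrm{Penrose}}(d)$; any stronger linear-in-$t$ bound on the two-time version as written would require a sharper coupling-based estimate on $\alpha(t)-\alpha(s)$ that I would need to work out separately.
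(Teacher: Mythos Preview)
Your diagnosis is exactly right: the displayed statement contains a typo, and the intended (and proved, and used) inequality is
\[
\int_0^t \ee|h(s,e_i)-h(s,0)|\,ds \le C(d)\,t,
\]
with $h(s,0)$ in place of $h(t,0)$. The paper's own proof never mentions $h(t,0)$; it works entirely at time $s$, and the lemma is invoked downstream (in the proof of Lemma~\ref{maintight} and again in Lemma~\ref{tightnew}) precisely in this same-time form. So your ``obstacle'' is an artifact of the typo, not a genuine gap.

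For the same-time version your argument is essentially the paper's. Both apply Lemma~\ref{mainlmm} with $n=2d+1$ to $\{h(s,0),h(s,\pm e_1),\ldots,h(s,\pm e_d)\}$, use translation invariance to identify the expected average with $\alpha(s)$ and the expected maximum with $\beta(s)$, and then integrate against Lemma~\ref{integratelmm} and bound $\alpha(t)$ via Lemma~\ref{penroselmm}. The only cosmetic difference is that the paper retains all $2d$ terms $|h(s,\pm e_j)-h(s,0)|$ in the lower bound from Lemma~\ref{mainlmm} before using symmetry to equate their expectations, arriving at $\beta(s)-\alpha(s)\ge \ee|h(s,e_i)-h(s,0)|/\bigl(4(2d+1)\bigr)$, whereas you keep only the single pair $\{0,e_i\}$ and get the constant $2(2d+1)(2d)$ instead; either way the final constant depends only on $d$.
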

\begin{proof}
By translation invariance of the law of $h(s,\cdot)$,
\begin{align*}
\alpha(s) = \frac{1}{2d+1}\ee\biggl(h(s,0)+\sum_{i=1}^dh(s, \pm e_i)\biggr). 
\end{align*}
On the other hand, by Lemma \ref{mainlmm},
\begin{align*}
&\max\biggl\{\max_{1\le i\le d} h(s,\pm e_i), \, h(s,0)\biggr\} - \frac{1}{2d+1}\biggl(h(s,0)+\sum_{i=1}^dh(s, \pm e_i)\biggr)\\
&\ge \frac{1}{8d(2d+1)} \sum_{a,b\in A} |h(s,a)-h(s,b)|,
\end{align*}
where $A:= \{0,\pm e_1,\ldots, \pm e_d\}$. But 
\[
\sum_{a,b\in A} |h(s,a)-h(s,b)| \ge \sum_{i=1}^d |h(s, \pm e_i)-h(s,0)|. 
\]
Combining the last three displays, and using the definitions of $\alpha$ and $\beta$, we get that for any $j$,
\begin{align*}
\beta(s)-\alpha(s) &\ge \frac{1}{8d(2d+1)}\sum_{i=1}^d \ee|h(s, \pm e_i)-h(s,0)| \\
&= \frac{\ee|h(s,e_j)-h(s,0)|}{4(2d+1)}.
\end{align*}
Integrating both sides from $0$ to $t$, and applying Lemma \ref{integratelmm} and Lemma \ref{penroselmm} completes the proof. 
\end{proof}
We will use Lemma \ref{bdlmm} to establish the tightness of a collection of $\Omega$-valued random variables. For that purpose, we need the following simple criterion.
\begin{lmm}\label{tightlmm}
Let $I$ be an arbitrary set. A collection of $\Omega$-valued random variables $\{f_i\}_{i\in I}$  is tight if and only if $\{f_i(x)\}_{i\in I}$ is a tight family of real-valued random variables for every $x\in \zz^d$.
\end{lmm}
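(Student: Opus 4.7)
The plan is to exploit the product topology structure of $\Omega = \zz^{\zz^d}$ together with Tychonoff's theorem. Since $\zz$ carries the discrete topology, its compact subsets are precisely the finite ones, so a set of the form $K = \prod_{x\in\zz^d} A_x$ is compact in $\Omega$ whenever each $A_x\subset\zz$ is finite.

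The forward direction is immediate. Each coordinate projection $\pi_x:\Omega\to\zz$ is continuous (it is one of the defining maps of the product topology), so if $\{f_i\}_{i\in I}$ is tight in $\Omega$ and $K\subset\Omega$ is a compact set with $\pp(f_i\notin K)\le\ep$ for all $i$, then $\pi_x(K)$ is compact (hence finite) in $\zz$ and $\pp(f_i(x)\notin\pi_x(K))\le\ep$ for all $i$. Thus $\{f_i(x)\}_{i\in I}$ is tight for each $x\in\zz^d$.

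For the converse, fix $\ep>0$ and enumerate $\zz^d = \{x_1,x_2,\ldots\}$. By hypothesis, for each $k$ we can choose a finite set $A_k\subset\zz$ such that
\[
\pp(f_i(x_k)\notin A_k) \le \ep\cdot 2^{-k} \quad\text{for every } i\in I.
\]
Let $K:=\{\omega\in\Omega:\omega(x_k)\in A_k \text{ for all } k\ge 1\}$, which is compact by Tychonoff. A union bound then yields
\[
\pp(f_i\notin K) \le \sum_{k=1}^\infty \pp(f_i(x_k)\notin A_k) \le \sum_{k=1}^\infty \ep\cdot 2^{-k} = \ep,
\]
uniformly in $i\in I$, which is exactly tightness of $\{f_i\}_{i\in I}$.

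There is no substantive obstacle here: this is the standard tightness criterion for a countable product of Polish spaces, specialized to the case of a discrete fiber where compactness reduces to finiteness. The only thing one has to be a little careful about is choosing the finite sets $A_k$ with summable error budgets $\ep\cdot 2^{-k}$, which makes the union bound converge and yields a single compact set valid for all $i$ simultaneously.
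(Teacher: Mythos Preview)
Your proof is correct and follows essentially the same approach as the paper: both directions use continuity of coordinate projections for the forward implication, and for the converse both choose finite sets at each site with summable error budgets (you use $2^{-k}$ via an enumeration, the paper uses $2^{-|x|}$) and invoke compactness of the product together with a union bound.
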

\begin{proof}
If $\{f_i\}_{i\in I}$ is a tight family, then the continuity of the projection $f\mapsto f(x)$ shows that for any $x$, $\{f_i(x)\}_{i\in I}$ is a tight family. Conversely, suppose that $\{f_i(x)\}_{i\in I}$ is a tight family for each $x$. Fix some $\delta>0$. Then for every $x\in \zz^d$, there exists $C_x>0$ such that $\pp(|f_i(x)| > C_x)\le 2^{-|x|}\delta$ for all $i\in I$. Let $K := \prod_{x\in \zz^d} ([-C_x, C_x]\cap \zz)$. Then $K$ is a compact subset of $\Omega$ under the product topology, and for any $i\in I$, 
\[
\pp(f_i \notin K) \le \sum_{x\in \zz^d} \pp(|f_i(x)|> C_x) \le C\delta,
\]
where $C$ does not depend on $n$. This completes the proof.
\end{proof}

For each $t$, let $\mu_t$ be the law of $h(s,\cdot)- h(s,0)$ where $s$ is chosen uniformly at random from $[0,t]$, and $h$ is our  ballistic deposition process with zero initial condition. Using Lemma \ref{bdlmm} and Lemma \ref{tightlmm}, we deduce the following result.
\begin{lmm}\label{maintight}
$\{\mu_t\}_{t\ge 0}$ is a tight family of probability measures on $\Omega$. 
\end{lmm}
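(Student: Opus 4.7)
The plan is to deduce tightness from Lemma \ref{tightlmm} by checking the one-dimensional marginals, using Lemma \ref{bdlmm} as the essential estimate. By Lemma \ref{tightlmm}, it suffices to show that for every fixed $x\in\zz^d$, the family of real-valued random variables obtained by projecting $\mu_t$ onto the coordinate at $x$ is tight as $t$ ranges over $[0,\infty)$. Under $\mu_t$ this marginal is the law of $h(s,x)-h(s,0)$ where $s$ is uniform on $[0,t]$, so by Markov's inequality it is enough to bound
\[
\frac{1}{t}\int_0^t \ee|h(s,x)-h(s,0)|\, ds
\]
uniformly in $t$.

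For $x=e_i$ this is exactly Lemma \ref{bdlmm}. For general $x$, the next step is to choose a nearest-neighbor lattice path $0=y_0, y_1,\ldots, y_k=x$ of length $k=|x|_1$, with $y_{j+1}-y_j=\pm e_{i_j}$ for some $i_j$. By the triangle inequality,
\[
|h(s,x)-h(s,0)|\le \sum_{j=0}^{k-1} |h(s,y_{j+1})-h(s,y_j)|.
\]
Because the law of $h(s,\cdot)$ is translation invariant (and the ballistic deposition dynamics is invariant under the symmetry $e_i\mapsto -e_i$), the expectation of each summand equals $\ee|h(s,e_{i_j})-h(s,0)|$. Integrating in $s$ from $0$ to $t$ and invoking Lemma \ref{bdlmm} once for each of the $k=|x|_1$ edges gives
\[
\int_0^t \ee|h(s,x)-h(s,0)|\, ds \le |x|_1\, C(d)\, t,
\]
with the same constant $C(d)$ from Lemma \ref{bdlmm}.

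Consequently, the expected absolute value of the $x$-coordinate under $\mu_t$ is at most $|x|_1 C(d)$, uniformly in $t$, which by Markov's inequality yields tightness of the one-dimensional marginal at $x$. Since this holds for every $x\in\zz^d$, Lemma \ref{tightlmm} then gives tightness of $\{\mu_t\}_{t\ge 0}$ on $\Omega$. There is no genuine obstacle in this proof; all of the analytic content has already been packaged into Lemma \ref{bdlmm}, and the only additional ingredient is the path-and-translation-invariance argument that passes from neighboring sites to arbitrary sites in the lattice.
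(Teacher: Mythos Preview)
Your proof is correct and follows essentially the same approach as the paper: bound $\ee|h(s,e_i)-h(s,0)|$ via Lemma \ref{bdlmm}, extend to arbitrary $x$ by writing $h(s,x)-h(s,0)$ as a telescoping sum along a nearest-neighbor path and using translation invariance, and then invoke Lemma \ref{tightlmm}. The only cosmetic difference is that the paper phrases the argument in terms of the random surface $g_t\sim\mu_t$ rather than the time-averaged integral, but the content is identical.
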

\begin{proof}
Let $g_t$ be an $\Omega$-valued random variable with law $\mu_t$. By Lemma \ref{bdlmm}, we have that for any $i$,
\begin{align}\label{gtbound}
\ee|g_t(e_i)| = \frac{1}{t}\int_0^t \ee|h(s,e_i)-h(s,0)| ds \le C(d). 
\end{align}
But by the translation invariance of the ballistic deposition process (since we are starting from an all zero initial condition), it is easy to see that for any $x$, 
\[
\ee|g_t(e_i)| = \ee|g_t(x+e_i)-g_t(x)|.
\]
For any $x\in\zz^d$, $g_t(x)$ can be written as the sum of the successive differences in the value of $g_t$ along a path connecting $0$ to $x$. Combining, we get that $\ee|g_t(x)|$ is bounded above by a number that may depend on $x$ and $d$, but not on $t$. Thus, $\{g_t(x)\}_{t\ge 0}$ is a tight family of random variables. By Lemma \ref{tightlmm}, this shows that $\{\mu_t\}_{t\ge 0}$ is a tight family of probability measures on $\Omega$. 
\end{proof}
Since $\Omega$ is a Polish space under the product topology, Lemma \ref{maintight} shows that $\{\mu_t\}_{t\ge 0}$ has subsequential weak limits on the space of probability measures on $\Omega$. In the remaining part of the proof of Theorem \ref{mainthm}, we will show that any subsequential weak limit is a stationary probability measure for the Markov process $\{h(t,\cdot) - h(t,0)\}_{t\ge 0}$. Note that we have not yet shown that the process is Markovian. We do this below.  

Fix some $T>0$. The updates up to time $T$ are governed by independent Poisson point processes on $[0,T]$, one for each site $x$. Jointly, these processes take value on the state space
\[
\Gamma := \prod_{x\in \zz^d} \Gamma_x,
\]
where, for each $x$, $\Gamma_x$ is the set of all finite subsets of $[0,T]$. Each $\Gamma_x$ is the union of $\Gamma_{x,n}$ over $n\ge 0$, where $\Gamma_{x,n}$ is the set of all subsets of $[0,T]$ of size $n$. Let us view $\Gamma_{x,n}$ as the collection of ordered $n$-tuples of distinct elements of $[0,T]$. This makes $\Gamma_{x,n}$ a subset of $[0,T]^n$, which induces a metric topology on $\Gamma_{x,n}$. Let $\Gamma_x$ be given  the topology generated by the open subsets of $\Gamma_{x,n}$ over all $n$. Finally, let $\Gamma$ be assigned the product topology.

Let $\lambda$ denote the joint law of our Poisson processes on $\Gamma$. Explicitly, $\lambda$ is the product of $\lambda_x$ over all $x$, where each $\lambda_x$ is the law of the Poisson process at site $x$, defined as follows. Each $\Gamma_{x,n}$ is given the Lebesgue measure inherited from $[0,T]^n$, normalized to have total mass $e^{-T} T^n/n!$. Since $\Gamma_x$ is the disjoint union of $\Gamma_{x,n}$ over $n\ge 0$, and the sum of $e^{-T} T^n/n!$  over $n\ge0$ equals $1$, this defines a probability measure on $\Gamma_x$, which is our $\lambda_x$.

A typical element of $\Gamma$ will be denoted by $P = (P_x)_{x\in \zz^d}$. Let $\Gamma'$ be the subset of $\Gamma$ consisting of all $P$ such that $P_x$ and $P_y$ are disjoint for every pair of distinct $x$ and $y$. Then it is easy to see that $\Gamma'$ is a measurable subset of $\Gamma$, and $\lambda(\Gamma')=1$. 

Given $f\in \Omega$ and $P\in \Gamma'$, let $\Psi(f,P)$ be the result of applying the ballistic deposition updates prescribed by $P$ to the initial condition $f$. Explicitly, this is defined as follows. Take any finite $D\subseteq \zz^d$. Since the union of $P_x$ over $x\in D$ is a finite subset of $[0,T]$, its elements can arranged in increasing order. Then, update $f$ inductively at these times according to ballistic deposition growth, but perform no updates outside $D$. Call the resulting surface $\Psi_D(f,P)$. It is easy to see that $\Psi_D(f,P)$ is monotone increasing in $D$. Thus, the pointwise limit of $\Psi_D(f,P)$ as $D$ increases to $\zz^d$ exists. Let us denote this limit by $\Psi(f,P)$. Note that this limit is in $(\zz\cup \{\infty\})^{\zz^d}$, but not necessarily in $\Omega$.

Given some $P\in \Gamma'$, we will now define for each $x\in \zz^d$ a set $S(x,P)\subseteq \zz^d$. To define $S(x,P)$, we first define inductively a sequence of numbers $T = t_0(x,P)> t_1(x,P) > \cdots \ge 0$ and a sequence of finite sets $\{x\}=S_0(x,P)\subseteq S_1(x,P)\subseteq \cdots$, as follows. Given $S_k(x,P)$ and $t_k(x,P)$, let $t$ be the largest number in $[0, t_k(x,P))$ such that $t\in P_y$ for some $y\in S_k(x,P)$. In case there exists such a $t$ and $y$, define $t_{k+1}(x,P) := t$, and $S_{k+1}(x,P)$ to be the union of $S_k(x,P)$ and the set of neighbors of $y$. In case there does not exist such $t$ and $y$ (which includes the case $t_k(x,P)=0$), stop, and define $S(x,P):= S_k(x,P)$. Also, to keep track of the stopping time, define $K(x,P):= k$. If this process never stops, let $S(x,P)$ be the union of $S_k(x,P)$ over all $k$, and let $K(x,P):= \infty$. 

Intuitively, $S(x,P)$ is the set such that the height at $x$ at time $t$ is determined by the heights at $y\in S(x,P)$ at time $0$ and the updates given by $P$. This will be made precise in the proof of Lemma \ref{psiimage} below, and also in Lemma \ref{slmm}.


Let $\Gamma''$ be the set of all $P\in \Gamma'$ such that $K(x,P)< \infty$ for all $x\in \zz^d$. We need the following facts about $\Gamma''$.
\begin{lmm}\label{gmeas}
$\Gamma''$ is a measurable subset of $\Gamma$.
\end{lmm}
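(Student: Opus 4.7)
The plan is to write
\[
\Gamma'' = \Gamma' \cap \bigcap_{x\in \zz^d} \{P\in \Gamma: K(x,P) < \infty\},
\]
note that $\Gamma'$ has already been observed to be measurable, and then argue that $\{K(x,P) < \infty\}$ is measurable for each fixed $x\in\zz^d$. Since this is a countable intersection, the result follows. So everything boils down to one $x$.

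Fix $x\in \zz^d$. I would prove measurability by induction on $k$, showing that for every $k\ge 0$, the event $\{K(x,P)\ge k\}$ is measurable, and that on this event the random finite set $S_k(x,P)\subseteq \zz^d$ and the random time $t_k(x,P)\in [0,T]$ are measurable functions of $P$. The base case is trivial, since $K(x,P)\ge 0$ always, $S_0(x,P)=\{x\}$ is a constant, and $t_0(x,P)=T$ is a constant. For the inductive step, I would decompose $\{K(x,P)\ge k\}$ as a countable disjoint union over the (countably many) possible values of the pair $S_k(x,P)$, i.e.\ over finite subsets $A\subseteq \zz^d$. On the event $\{K(x,P)\ge k,\, S_k(x,P)=A\}$, the candidate time $t_{k+1}(x,P)$ is the maximum of the finite set $\bigcup_{y\in A}(P_y\cap [0,t_k(x,P)))$; this maximum exists iff that set is nonempty, which cuts out exactly the event $\{K(x,P)\ge k+1\}\cap \{S_k(x,P)=A\}$. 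Using the definition of the topology on each $\Gamma_{y,n}$ (inherited from $[0,T]^n$), the functions $P\mapsto |P_y\cap[0,s)|$ and $P\mapsto \max(P_y\cap [0,s))$ are measurable in $(P,s)$ on each stratum $\Gamma_{y,n}$, hence on $\Gamma_y$; since $A$ is finite this extends to the union $\bigcup_{y\in A}P_y$. Thus $t_{k+1}(x,P)$ is measurable on the relevant event, and on $\Gamma'$ the site $y\in A$ realizing this maximum is unique and is a measurable function of $P$. Finally $S_{k+1}(x,P)=A\cup\{y\pm e_i:1\le i\le d\}$, which is a measurable function through $y$.

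With the inductive claim in hand, the event $\{K(x,P)=k\}=\{K(x,P)\ge k\}\setminus \{K(x,P)\ge k+1\}$ is measurable for every $k$, and
\[
\{K(x,P)<\infty\} = \bigsqcup_{k=0}^\infty \{K(x,P)=k\}
\]
is a countable union of measurable sets. Intersecting over $x\in \zz^d$ and with $\Gamma'$ gives the measurability of $\Gamma''$.

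The only step with any subtlety is the inductive measurability check, and that only because one must be careful that the definition $t_{k+1}=\max\bigcup_{y\in S_k}(P_y\cap [0,t_k))$ is a measurable operation even though $S_k$ and the $P_y$ are themselves random. The device of decomposing $\{K(x,P)\ge k\}$ into the countably many strata $\{S_k(x,P)=A\}$ reduces this to the measurability of the maximum of a fixed-size random finite set intersected with a random interval, which is immediate from the product/stratum structure placed on $\Gamma$.
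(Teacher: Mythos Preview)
Your proof is correct and follows essentially the same approach as the paper's: both reduce to showing that $\{P:K(x,P)<\infty\}$ is measurable for each fixed $x$, and both do so by decomposing this event into countably many pieces indexed by a finite vertex set and a finite value of $k$, each piece depending only on finitely many coordinates $P_y$. Your inductive argument, carrying along the measurability of $S_k(x,P)$ and $t_k(x,P)$ and stratifying by $\{S_k(x,P)=A\}$, is a more explicit unpacking of what the paper compresses into the sentence ``this event can be explicitly written down in terms of $\{P_y\}_{y\in D}$.''
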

\begin{proof}
It suffices to show that the set $\{P:K(x,P)<\infty\}$ is measurable for any given $x$. Fix some $x$. Then this set is the union over all finite $D\subseteq \zz^d$ and $k\ge 0$ of the events $\{P: S(x,P)=D, \, K(x,P)=k\}$. For a given $D$ and $k$, this event can be explicitly written down in terms of $\{P_y\}_{y\in D}$. Since $D$ is finite, it is now easy to check that this is a measurable set. 
\end{proof}

\begin{lmm}\label{psiimage}
$\Psi$ maps $\Omega\times \Gamma''$ into $\Omega$. 
\end{lmm}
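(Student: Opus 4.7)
The plan is to show that $\Psi(f,P)(x) \in \zz$ for every $x \in \zz^d$, so that the pointwise monotone limit defining $\Psi(f,P)$ lies in $\Omega = \zz^{\zz^d}$. The guiding idea is to interpret $S(x,P)$ as the \emph{backward domain of dependence} of the space-time point $(x,T)$ under the ballistic deposition dynamics: the set of all sites whose updates in $[0,T]$ can possibly influence $h(T,x)$.

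First, I would show that $S(x,P)$ is finite. Since $P \in \Gamma''$ forces $K(x,P) < \infty$, and since at each iteration the recipe only adds the neighbors of a single site, $|S_{k+1}(x,P) \setminus S_k(x,P)| \le 2d$. Hence $|S(x,P)| \le 1 + 2d\,K(x,P) < \infty$.

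The main step is to verify, by a backward induction along the decreasing sequence of times $T = t_0(x,P) > t_1(x,P) > \cdots > t_{K(x,P)}(x,P) \ge 0$, that for any finite $D$ with $D \supseteq S(x,P)$ one has
\begin{align*}
\Psi_D(f,P)(x) \;=\; \Psi_{S(x,P)}(f,P)(x).
\end{align*}
The base case is $K(x,P) = 0$: there are then no updates at $x$ in $[0,T]$, so $\Psi_D(f,P)(x) = f(x)$ for every finite $D \ni x$. For the inductive step, after the last update at $x$ (at time $t_1 = t_1(x,P)$) there are no further updates at $x$, so $\Psi_D(f,P)(x)$ equals the height at $x$ just after $t_1$, which by the deposition rule is determined by the values at time $t_1^-$ at $x$ and at its $2d$ neighbors, that is, at every site in $S_1(x,P)$. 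Now by maximality of $t_1$, none of the sites in $S_1(x,P) \setminus \{y_1\}$ (where $y_1$ is the site updated at $t_1$, here $y_1 = x$) is updated in $(t_2,t_1)$, and the site $y_1$ itself satisfies the same deposition recurrence at time $t_2$. Iterating this $K(x,P)$ times shows that $\Psi_D(f,P)(x)$ is a deterministic function of $f|_{S(x,P)}$ and of the finitely many update times $\bigcup_{y \in S(x,P)} P_y \cap [0,T]$, and in particular is independent of $D$ once $D \supseteq S(x,P)$.

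Combining the two ingredients, the monotone limit $\Psi(f,P)(x) = \lim_{D \uparrow \zz^d} \Psi_D(f,P)(x)$ equals $\Psi_{S(x,P)}(f,P)(x)$, which is a finite integer because $S(x,P)$ is finite and each update raises a single coordinate by at most the height of a finite window (an integer). Hence $\Psi(f,P) \in \Omega$. The main obstacle is the backward induction, and specifically the verification that updates at sites outside $S(x,P)$ cannot propagate into $x$ by time $T$: this is exactly what the maximality rule defining $t_{k+1}(x,P)$ buys, since between consecutive times $t_{k+1}$ and $t_k$ no site currently in the dependence set is updated, so its height is frozen and no update at an outside site can enter the recurrence.
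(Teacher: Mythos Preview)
Your argument is correct. Both your approach and the paper's rest on the same backward-in-time analysis of the sets $S_k(x,P)$ and times $t_k(x,P)$, but you use it differently. The paper simply bounds the height: it shows by induction that the maximum height over $S_{k-1}$ just after time $t_k$ is at most the maximum over $S_k$ just before $t_k$, plus one; chaining from $k=K$ down to $k=1$ gives $\Psi_D(f,P)(x) \le \max_{z\in S(x,P)} f(z) + K(x,P)$ uniformly in $D$, which is enough for finiteness of the monotone limit. You instead prove stabilization, namely that $\Psi_D(f,P)(x) = \Psi_{S(x,P)}(f,P)(x)$ once $D \supseteq S(x,P)$, by tracing that the height at $x$ at time $T$ depends only on heights at $S_k$ at time $t_k^-$, and these are unaffected by updates outside $S_k$. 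This is a slightly stronger conclusion --- it is essentially the content of the paper's later Lemma~\ref{slmm} --- and it yields finiteness because $\Psi_{S(x,P)}(f,P)(x)$ is computed by finitely many updates on a finite window. Two minor points of presentation: your ``backward induction'' is really an iterative unrolling rather than an induction on $K$, and it would be cleaner to state the invariant explicitly (e.g., ``for each $k$, the heights at $S_k$ at time $t_k^-$ agree in the $\Psi_D$ and $\Psi_{S(x,P)}$ processes''); also note that $\Psi_{S(x,P)}(f,P)(x)$ may read heights at neighbors of $S(x,P)$, so strictly speaking it depends on $f$ on the closure of $S(x,P)$, though this does not affect your finiteness conclusion.
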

\begin{proof}
Take any $x\in \zz^d$, $f\in \Omega$ and $P\in \Gamma''$. For simplicity, let us write $t_k$, $S_k$ and $K$ instead of $t_k(x,P)$, $S_k(x,P)$ and $K(x,P)$. Since there are no updates at any element of $S_K$ in the time interval $[0,t_K)$, the heights at $S_K$ undergo no change in this interval. When the first update happens at time $t_K$, it only affects heights at $S_{K-1}$. Moreover, since the neighbors of the element of $S_{K-1}$ where the update occurs at time $t_K$ are elements of $S_K$, it follows that the maximum height in $S_{K-1}$ after the update is at most the maximum height in $S_K$ before the update plus one. By a similar argument, we have that for each $k$, the maximum height in $S_{k-1}$ after time $t_k$ is at most the maximum height in $S_k$ before time $t_k$ plus one. This proves that the final height at $x$ is finite.
\end{proof}

\begin{lmm}\label{contlmm}
$\Psi$ is a continuous map on $\Omega \times \Gamma''$. 
\end{lmm}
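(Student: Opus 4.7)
The plan is to establish a localization principle — that $\Psi(f, P)(x)$ is determined by $f$ and $P$ restricted to a finite neighborhood of $x$ — and then deduce continuity from the nature of the product topology. First I would prove the following finite-dependence statement: fix $(f, P) \in \Omega \times \Gamma'$, and suppose $S \subseteq \zz^d$ is a finite set with the \emph{closedness} property that every $y \in S$ with $P_y \ne \emptyset$ has all $2d$ of its lattice neighbors in $S$. Then for every $x \in S$ and every finite $D \supseteq S$,
\[
\Psi_D(f, P)(x) \;=\; \Psi_S(f, P)(x).
\]
The proof is a direct induction along the chronological order of the finitely many update times $\tau_1 < \dots < \tau_M$ in $D$, with inductive hypothesis that heights at sites in $S$ agree in the $D$- and $S$-dynamics just before each $\tau_k$. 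If the update at $\tau_k$ is at $z \notin S$, only $h(z)$ changes in the $D$-dynamics, so heights on $S$ are unaffected. If it is at $z \in S$, then $P_z \ne \emptyset$, so by closedness the ballistic rule reads only heights at neighbors of $z$, all lying in $S$, and these agree between the two dynamics by hypothesis; the updated value of $h(z)$ is therefore the same. Passing to the limit $D \nearrow \zz^d$ gives $\Psi(f, P)|_S = \Psi_S(f, P)|_S$.

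The key observation is that $S := S(x, P_0)$ has the closedness property for any $P_0 \in \Gamma''$ — this is precisely what the backward construction is engineered to ensure, since whenever a site $y_l$ is processed, its neighbors are added to $S_l$. Closedness is also stable under perturbation: if $P$ satisfies $|P_y| = |P_{0, y}|$ for every $y \in S(x, P_0)$, then $P_y \ne \emptyset$ forces $P_{0, y} \ne \emptyset$, so $y$ is a processed site in the $P_0$-construction, and its neighbors lie in $S(x, P_0)$. Using these, continuity follows quickly. Fix $(f_0, P_0) \in \Omega \times \Gamma''$ and an arbitrary finite $D' \subseteq \zz^d$ (a basic neighborhood of $\Psi(f_0, P_0)$ in $\Omega$ is specified by finitely many such coordinates). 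Set $S_0 := \bigcup_{x \in D'} S(x, P_0)$, which is finite. The finitely many $P_0$-update times at sites in $S_0$ are pairwise distinct, so have a positive minimum gap; let $\epsilon$ be half of this gap. Define $U$ to be the basic product-open neighborhood of $(f_0, P_0)$ in $\Omega \times \Gamma''$ consisting of $(f, P)$ with $f|_{S_0} = f_0|_{S_0}$, and with $|P_y| = |P_{0, y}|$ and the points of $P_y$ within $\epsilon$ of those of $P_{0, y}$ for each $y \in S_0$. On $U$, the two observations above yield $\Psi(f, P)(x) = \Psi_{S(x, P_0)}(f, P)(x)$ for each $x \in D'$; the right-hand side is a finite-data computation depending only on $f|_{S(x, P_0)}$ and the ordered sequence of updates at $S(x, P_0)$, and by the choice of $\epsilon$ these coincide with $f_0|_{S(x, P_0)}$ and with the corresponding ordered sequence for $P_0$ (same sites, same order, only the times perturbed). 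Hence $\Psi(f, P)(x) = \Psi(f_0, P_0)(x)$ throughout $U$, which is precisely continuity of $\Psi$ at $(f_0, P_0)$.

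The main obstacle is the localization principle: one must verify carefully that updates at sites outside $S$ cannot propagate into $S$, and this requires exactly the closedness property rather than any weaker notion. A subtle point worth flagging is that a site $y \in S(x, P_0)$ that entered $S$ as a neighbor but was never itself processed (i.e., $P_{0, y} = \emptyset$) need \emph{not} have its own neighbors in $S(x, P_0)$; this is harmless because such $y$ never fires, and by the preservation of cardinalities in the product topology on $\Gamma$ the same remains true for any $P$ in the chosen neighborhood $U$.
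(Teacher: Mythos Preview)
Your overall strategy --- localize $\Psi(f,P)(x)$ to data on the finite set $S(x, P_0)$ and then argue that this data is unchanged on a product-open neighborhood --- is exactly the paper's. But there is a genuine gap in your execution: the closedness property you attribute to $S := S(x, P_0)$ is false. You equate ``$y$ is processed in the backward construction'' with ``$P_{0,y}\ne\emptyset$'', and this is the error. A site $y$ may enter $S$ as a neighbor at some step $k$ while all of its update times lie in $[t_k,T]$; from step $k$ onward the construction only searches for times strictly below $t_k$, so such a $y$ is never processed and its neighbors are never added.

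Concretely, take $d=1$, $T=1$, $x=0$, $P_{0,0}=\{0.5\}$, $P_{0,1}=\{0.8\}$, all other $P_{0,y}$ empty. Then $t_1=0.5$, $S_1=\{-1,0,1\}$, and the construction halts (no time in $[0,0.5)$ at any site of $S_1$), so $S=\{-1,0,1\}$. Yet $1\in S$ has $P_{0,1}\ne\emptyset$ and its neighbor $2\notin S$. If you further set $P_{0,2}=\{0.6\}$, the set $S$ is unchanged, but in the full dynamics the update at site $1$ at time $0.8$ reads the updated height at $2$, which depends on $f(3)$; hence $\Psi(f,P)|_S\ne\Psi_S(f,P)|_S$ in general (they differ at the coordinate $1$), and your forward induction breaks down.

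The repair is to replace the forward induction by the paper's backward one, and to claim agreement only at the single point $x$ rather than on all of $S$. One shows, by downward induction on $k$ from $K$ to $0$, that the heights on $S_k$ at time $t_k^-$ are determined by $f|_{S}$ and by the relative order and locations of the updates at $S$: no site of $S_k$ updates in $(t_{k+1},t_k)$ by the very definition of $t_{k+1}$, and the update at $t_{k+1}$ occurs at a site of $S_k$ whose neighbors lie in $S_{k+1}$. At $k=0$ this gives the value at $x$. With this correct localization in hand, your neighborhood $U$ works as written: preserving the relative order of all update times at $S(x,P_0)$ forces the backward construction for $P$ to proceed through the same sites in the same order, so $S(x,P)=S(x,P_0)$ and the same inductive chain yields $\Psi(f,P)(x)=\Psi(f_0,P_0)(x)$.
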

\begin{proof}
Take any $x\in \zz^d$ and $j\in \zz^d$, and let $\Omega_{x,j} := \{f\in \Omega: f(x)=j\}$. Since these sets form a subbase for the topology on $\Omega$, it suffices to show that $\Psi^{-1}(\Omega_{x,j})$ is relatively open in $\Omega \times \Gamma''$. Accordingly, take any $(f,P)$ in this inverse image. Let $g := \Psi(f,P)$, so that $g(x)=j$. Let $S := S(x,P)$, which is finite since $P\in \Gamma''$. Take any $f'\in \Omega$  such that $f'(y)=f(y)$ for all $y\in S(x,P)$, and any $P'\in \Gamma''$ such that the relative temporal order and spatial locations of the updates in $\cup_{y\in S} P_y'$ are the same as that in $\cup_{y\in S} P_y$. Let $g':= \Psi(f',P')$. Then by the same inductive argument as in the proof of Lemma \ref{psiimage}, we get that $g'(x)=j$. Moreover, the set of all $f'$ with the above property is an open subset of $\Omega$, and the set of all $P'\in \Gamma''$ with the above property is a relatively open subset of $\Gamma''$. This proves that $\Psi^{-1}(\Omega_{x,j})$ is a relatively open subset of $\Omega \times \Gamma''$. 
\end{proof}

\begin{lmm}\label{gprob}
$\lambda(\Gamma'') = 1$. 
\end{lmm}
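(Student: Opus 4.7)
The plan is to use countable additivity to reduce to showing $\lambda(K(x,P)<\infty)=1$ for each fixed $x\in\zz^d$. Observe first that almost surely, $K(x,P)<\infty$ is equivalent to $|S(x,P)|<\infty$: the forward implication is trivial since each $S_k$ is finite, and for the converse, once the increasing sets $S_k$ stabilize at a finite $S^*$, the backward search is confined to the almost surely finite set $\bigcup_{y\in S^*}P_y\cap[0,T]$, so only finitely many further steps can occur.

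The crux is a combinatorial characterization of $S(x,P)$. Each $y\in S(x,P)\setminus\{x\}$ has a uniquely defined ``parent'' $y_{k(y)}$, namely the site in $S_{k(y)-1}$ where the update processed at step $k(y)$ occurred; this parent is a lattice neighbor of $y$, and the associated time $t_{k(y)}$ lies in $P_{y_{k(y)}}$. Iterating the parent map produces an adjacent lattice path $x=w_0,w_1,\ldots,w_m=y$ together with times $T\ge s_0>s_1>\cdots>s_{m-1}\ge 0$ satisfying $s_i\in P_{w_i}$ for $0\le i<m$. Consequently, if $S(x,P)$ contains some $y$ at graph distance at least $m$ from $x$, then some such ``time-decreasing'' adjacent path of length at least $m$ must emanate from $x$.

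A first-moment bound then completes the proof. For any fixed adjacent path $x=w_0,w_1,\ldots,w_{m-1}$, the expected number of tuples $(s_0,\ldots,s_{m-1})\in P_{w_0}\times\cdots\times P_{w_{m-1}}$ with $T\ge s_0>\cdots>s_{m-1}\ge 0$ equals, by independence of the unit-rate Poisson processes $P_{w_i}$ and Campbell's formula,
\[
\int_{T\ge s_0>\cdots>s_{m-1}\ge 0}\d s_0\cdots\d s_{m-1}=\frac{T^m}{m!}.
\]
Summing over the at most $(2d)^m$ adjacent paths of length $m$ starting at $x$ and applying Markov's inequality yields the bound $(2dT)^m/m!$ for the probability that some time-decreasing adjacent path of length exactly $m$ emanates from $x$. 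Since $\sum_m(2dT)^m/m!=e^{2dT}<\infty$, the Borel--Cantelli lemma forces such paths to exist only for finitely many $m$ almost surely, so $S(x,P)$ is contained in a finite ball around $x$ with probability one. I do not expect a major obstacle---the argument is essentially a backward light-cone first-moment estimate familiar from oriented percolation---the only care being to extract the ancestor-path characterization cleanly from the recursive definition of $S(x,P)$.
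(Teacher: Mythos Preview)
Your argument is correct and takes a genuinely different route from the paper's. The paper extends each $P_x$ to a Poisson process on $(-\infty,T]$, so that the backward exploration $(t_k)_{k\ge 0}$ never stops; it then uses the memoryless property to show that, conditionally on the past, $t_k-t_{k+1}$ is exponential with rate $|S_k|\le 2dk+1$, computes
\[
\ee(e^{t_k-T})\le \prod_{j=0}^{k-1}\frac{2dj+1}{2dj+2}\to 0,
\]
and concludes $t_\infty=-\infty$ a.s., hence $K(x,P)<\infty$ a.s. In contrast, you control $|S(x,P)|$ directly via a backward light-cone argument: extract from the recursion an ancestral nearest-neighbor path with strictly decreasing Poisson times, and bound the probability that such a path of length $m$ exists by $(2dT)^m/m!$ via a first-moment (Campbell) computation. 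Your approach is more elementary and in fact matches the technique the paper deploys later in Lemma~\ref{rhotail} to obtain a quantitative tail bound on $\rho(x,P)$; the paper's Laplace-transform argument for Lemma~\ref{gprob} is slicker but gives no spatial information. Two small points: (i) your ancestral path is automatically self-avoiding (the entry times $k(w_i)$ are strictly increasing), so the independence you invoke is justified; (ii) since the events $A_m=\{\text{some length-}m\text{ path exists}\}$ are nested, Borel--Cantelli is not needed --- $\pp(A_m)\to 0$ already gives $\pp(\bigcap_m A_m)=0$.
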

\begin{proof}
Let $P$ be a random element of $\Gamma$ with law $\lambda$. Then each $P_x$ is a Poisson process on $[0,T]$, and the $P_x$'s are independent. Fix some $x$, and for simplicity, let us write $t_k$, $S_k$ and $K$ instead of $t_k(x,P)$, $S_k(x,P)$ and $K(x,P)$. Let us extend the definition of $t_k$ as follows. First, suppose that we have independent Poisson processes $P_x'$ on $(-\infty, T]$ instead of $[0,T]$. Then, given $S_k$, define $t_{k+1}$ to be the largest $t\in (-\infty, t_k)$ such that $t\in P_y'$ for some $y\in S_k$. Then the new $t_k$ is the same as the old one up to $k=K$, and $K+1$ is the smallest $j$ such that $t_j < 0$. 

For each $t\in (-\infty, T]$, let $\mf_t$ be the $\sigma$-algebra generated by $(P_y'\cap[t, T])_{y\in \zz^d}$. Then $\{\mf_t\}_{t\in (-\infty, T]}$ is a filtration of $\sigma$-algebras, with time moving in the reverse direction (that is, $\mf_s \subseteq \mf_t$ if $s\ge t$). Note that each $t_k$ is a stopping time with respect to this filtration. Let $\mf_{t_k}$ be the stopped $\sigma$-algebra of $t_k$, meaning that an event $A$ is in $\mf_{t_k}$ if and only if for any $t\in (-\infty, T]$, the event $A\cap \{t_k \ge t\}$ is in $\mf_t$. Then, by the memoryless property of Poisson processes, the conditional distribution of $t_k-t_{k+1}$ given $\mf_{t_k}$ is exponential with mean $1/|S_k|$, because it is the minimum of $|S_k|$ independent exponential random variables with mean $1$. This lets us compute
\begin{align*}
\ee(e^{-(t_k-t_{k+1})}\mid \mf_{t_k}) &= \frac{|S_k|}{|S_k|+1}. 
\end{align*}
By construction, $|S_k|\le 2dk + 1$. Thus, by the above display and the fact that $x\mapsto x/(x+1)$ is an increasing map on $(0,\infty)$, we get
\[
\ee(e^{-(t_k-t_{k+1})}\mid \mf_k) \le \frac{2dk+1}{2dk+2}. 
\]
From this, we easily get
\begin{align*}
\ee(e^{t_k - T}) &= \ee\biggl(\prod_{j=0}^{k-1} e^{t_{j+1}-t_j}\biggr) \le \prod_{j=0}^{k-1} \frac{2dj + 1}{2dj+2}. 
\end{align*}
Since $\{t_k\}_{k\ge 0}$ is a decreasing sequence in $(-\infty, T]$, the limit $t_\infty:= \lim_{k\to\infty} t_k$ exists in $[-\infty, t]$. Thus, by the above inequality and the dominated convergence theorem, we get
\[
\ee(e^{t_\infty}) \le e^T \prod_{j=0}^\infty \frac{2dj + 1}{2dj+2} = 0.
\]
Thus, $t_\infty = -\infty$ almost surely, which means that $K<\infty$ almost surely. Consequently, $S(x,P)$ is finite almost surely. Since this holds for every $x$ and $\zz^d$ is countable, we get that $\lambda(\Gamma'')=1$.
\end{proof}

To summarize, we have produced a measurable set $\Gamma''\subseteq \Gamma$ and a continuous map $\Psi :\Omega \times \Gamma''\to\Omega$, such that for any $f\in \Omega$ and $P\in \Gamma''$, $\Psi(f, P)$ is the ballistic deposition surface at time $T$ with initial condition $f$ and update times given by $P$. Moreover, we have shown that  the law $\lambda$ of our Poissonian  update times is supported on $\Gamma''$.  As a first application, we show the following.   
\begin{lmm}\label{markovlmm}
Let $w(t,x) := h(t,x)-h(t,0)$. Then $\{w(t,\cdot)\}_{t\ge 0}$ is a time-homogeneous Markov process on $\Omega$.
\end{lmm}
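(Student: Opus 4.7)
The plan is to build the transition structure of $w$ directly from the map $\Psi$ and the shift-equivariance of ballistic deposition updates. The key observation is that the update rule
\[
f(x) \mapsto \max\biggl\{\max_{1\le i\le d} f(x\pm e_i),\, f(x)+1\biggr\}
\]
commutes with adding any constant $c\in\zz$ to all heights simultaneously. Iterating along the times in $P$, this gives the \emph{equivariance identity} $\Psi(f+c, P) = \Psi(f,P) + c$ for every $f\in\Omega$, $c\in\zz$, and $P\in\Gamma''$. I would state and verify this as the first step.

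Next, for any $0\le t\le t+s$, let $P^{(t,t+s)}$ denote the restriction of the joint Poisson process to the time window $(t,t+s]$, translated so that it lives on $(0,s]$. By the independent-increments property of Poisson processes, $P^{(t,t+s)}$ has the same law as $P^{(0,s)}$ and is independent of the $\sigma$-algebra $\mf_t$ generated by the updates in $[0,t]$. Applying Lemma \ref{gprob} with time horizon $s$ shows that $P^{(t,t+s)}\in\Gamma''$ almost surely, so $\Psi(\,\cdot\,,P^{(t,t+s)})$ is a well-defined $\Omega$-valued map on this full-measure event. By construction of ballistic deposition,
\[
h(t+s,\cdot) = \Psi\bigl(h(t,\cdot),\, P^{(t,t+s)}\bigr)
\]
almost surely. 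Combining this with equivariance applied to $c = h(t,0)$ yields
\[
h(t+s,\cdot) = \Psi\bigl(w(t,\cdot),\, P^{(t,t+s)}\bigr) + h(t,0),
\]
and therefore
\[
w(t+s,\cdot) = \Psi\bigl(w(t,\cdot),\, P^{(t,t+s)}\bigr) - \Psi\bigl(w(t,\cdot),\, P^{(t,t+s)}\bigr)(0).
\]

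This is the crux: $w(t+s,\cdot)$ is an explicit deterministic function of $w(t,\cdot)$ and $P^{(t,t+s)}$. Since $w(u,\cdot)$ for $u\le t$ is $\mf_t$-measurable and $P^{(t,t+s)}$ is independent of $\mf_t$, the conditional law of $w(t+s,\cdot)$ given $\{w(u,\cdot):u\le t\}$ depends only on $w(t,\cdot)$, which is the Markov property. Time-homogeneity follows because the law of $P^{(t,t+s)}$ is the same as that of $P^{(0,s)}$, so the transition kernel
\[
\mk_s(f, A) \;:=\; \pp\Bigl(\Psi(f,P^{(0,s)}) - \Psi(f,P^{(0,s)})(0) \in A\Bigr)
\]
does not depend on $t$.

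The only point that is not purely formal is the measurability of $\mk_s$, that is, that $(f,P)\mapsto \Psi(f,P)-\Psi(f,P)(0)$ is a measurable map from $\Omega\times\Gamma''$ to $\Omega$; this is the main ingredient and it is precisely what the continuity statement in Lemma \ref{contlmm} provides, together with the continuity of the evaluation map $g\mapsto g(0)$ on $\Omega$. I would close by noting that all of the above identities hold on the full-measure event $\{P\in\Gamma'',\, P^{(t,t+s)}\in\Gamma''\}$, which suffices to identify $\mk_s$ as the transition kernel of the process $\{w(t,\cdot)\}_{t\ge 0}$.
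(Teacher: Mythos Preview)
Your argument is correct and follows essentially the same approach as the paper: establish the shift-equivariance $\Psi(f+c,P)=\Psi(f,P)+c$, use it (with $c=h(t,0)$) to express $w(t+s,\cdot)$ as a measurable function of $w(t,\cdot)$ and the independent Poisson increments $P^{(t,t+s)}$, and conclude the Markov property and time-homogeneity from there. In fact you spell out explicitly the steps (independence of increments, time-homogeneity via stationarity of $P^{(t,t+s)}$, measurability via Lemma~\ref{contlmm}) that the paper's proof summarizes with ``It is not hard to show using this\ldots''.
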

\begin{proof}
Fix some $T> 0$, as above, and take any initial condition $h(0,\cdot)$. By the above construction, $h(T,\cdot)$ can be written as $\Psi(h(0,\cdot), P)$, where $P$ has law $\lambda$. It is not difficult to show that if the initial condition $h(0,\cdot)$ is replaced by $h(0,\cdot)-c$ for some constant $c$, then the height function at time $T$ becomes $h(T, \cdot)-c$. Taking $c = h(0,0)$, we see that with initial condition $w(0,\cdot)$, the height function at time $T$ is $h_1(T,\cdot) := h(T,\cdot)-h(0,0)$. If we subtract off the constant $h_1(T,0)$ from this height function, we get $w(T,\cdot)$, since 
\[
w(T,x) = h(T,x)-h(T,0) = h_1(T,x) - h_1(T,0). 
\]
In other words, $h_1(T,\cdot) = \Psi(w(0,\cdot), P)$ and $w(T,\cdot) =  h_1(T,\cdot) - h_1(T,0)$. Thus, $w(T,\cdot)$ can be expressed as a measurable function of $w(0,\cdot)$ and $P$. It is not hard to show using this that $\{w(t, \cdot)\}_{t\ge 0}$ is a time-homogeneous Markov process. 
\end{proof}
We are now ready to finish the proof of Theorem \ref{mainthm}. Let $\{\cp_t\}_{t\ge 0}$ denote the transition semigroup for $\{w(t,\cdot)\}_{t\ge 0}$. That is, for a function $\Phi:\Omega \to \rr$, $\cp_t \Phi(f)$ is the expected value of $\Phi(w(t,\cdot))$ given $w(0,\cdot)=f$, provided that this expectation is well-defined. 
Recall the tight family of probability measures $\{\mu_t\}_{t\ge 0}$ from Lemma \ref{maintight}. By Theorem \ref{fellerthm} (which will be proved later in Section \ref{fellerproofsec}), the semigroup $\{\cp_t\}_{t\ge 0}$ has the Feller property. Combining this result with the tightness of $\{\mu_t\}_{t\ge 0}$, and applying the Kyrlov--Bogoliubov theorem (e.g., \cite[Corollary 3.1.2]{dapratozabczyk96}), we get the existence of a stationary probability measure for the process $\{w(t,\cdot)\}_{t\ge 0}$.

For the process $\{\delta h(t,\cdot)\}_{t\ge 0}$, simply note that the two processes are in a simple one-to-one correspondence, because $w(t,\cdot)$ can be computed using $\delta h(t,\cdot)$, and $\delta h(t,\cdot)$ can be read off from $w(t,\cdot)$. Using this, it is easy to show that (a) $\{\delta h(t,\cdot)\}_{t\ge 0}$ is a time-homogeneous Markov process, and (b) if $g$ is drawn from a stationary distribution of $\{w(t,\cdot)\}_{t\ge 0}$, then the law of $\delta g$ is a stationary probability measure for $\{\delta h(t,\cdot)\}_{t\ge 0}$. The translation invariance and invariance under lattice symmetries follow from the construction of the stationary measure. Lastly, the integrability of $g(x)$ follows from inequality \eqref{gtbound} and the fact that if $\{X_n\}_{n\ge 1}$ is a sequence of  nonnegative real-valued random variables converging in law to $X$, then $\ee(X)\le \liminf_{n\to\infty} \ee(X_n)$. 

\section{Proof of Theorem \ref{simulthm}}\label{simulproof}
We will continue to use the notations introduced in Section \ref{proofsec}. In particular, $h$ denotes a ballistic deposition process on $\zz^d$ started from an all zero initial condition, $w(t,x) = h(t,x)-h(t,0)$, and $\{\cp_t\}_{t\ge0}$ is the transition semigroup for the Markov process $\{w(t,\cdot)\}_{t\ge 0}$. The dual operator $\cp_t^*$  acts on the space of probability measures on $\Omega$ in the usual way --- if $\nu$ is a probability measure on $\Omega$, then $\cp_t^* \nu$ is the law of $w(t,\cdot)$ if $w(0,\cdot)$ is drawn from $\nu$.  Given $a > 0$, let $\gamma_a$ denote the law of $w(s,\cdot)$, where $s$ is drawn from the exponential distribution with mean $a$. The proof of the following lemma is essentially a reworking of the proof of the Krylov--Bogoliubov theorem.
\begin{lmm}\label{tightnew}
$\{\gamma_a\}_{a>0}$ is a tight family of probability measures on $\Omega$, and any subsequential weak limit of this family, as $a\to \infty$, is a stationary measure for the semigroup $\{\cp_t\}_{t\ge 0}$.
\end{lmm}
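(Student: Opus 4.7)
\textbf{Proof proposal for Lemma \ref{tightnew}.}

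The plan is to adapt the argument used in the proof of Theorem \ref{mainthm}, replacing the uniform mixing measure on $[0,t]$ by the exponential distribution of mean $a$, and exploiting the memorylessness of the exponential to get stationarity in the limit $a\to\infty$. Throughout, let $\nu_s$ denote the law of $w(s,\cdot)$ so that $\gamma_a = \int_0^\infty \frac{1}{a} e^{-s/a} \nu_s\, ds$.

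\emph{Step 1 (Tightness).} Let $g_a$ have law $\gamma_a$ and fix $i$. By Fubini and Lemma \ref{bdlmm}, writing $F(t) := \int_0^t \ee|w(s,e_i)|\, ds \le C(d)t$, an integration by parts gives
\[
\ee|g_a(e_i)| = \int_0^\infty \frac{1}{a} e^{-s/a} \ee|w(s,e_i)|\, ds = \int_0^\infty \frac{1}{a^2} e^{-s/a} F(s)\, ds \le C(d),
\]
with no boundary contributions since $F(0)=0$ and $e^{-s/a}F(s) = O(s e^{-s/a}) \to 0$. Translation invariance of $h$ (starting from the all-zero initial condition) gives $\ee|g_a(x+e_i)-g_a(x)| = \ee|g_a(e_i)|$, so telescoping along a lattice path from $0$ to any $x$ yields a bound on $\ee|g_a(x)|$ depending on $x$ and $d$ but not on $a$. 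Lemma \ref{tightlmm} then shows $\{\gamma_a\}_{a>0}$ is tight.

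\emph{Step 2 (Approximate stationarity).} Fix $T > 0$ and a bounded continuous $\Phi:\Omega\to\rr$ with $|\Phi|\le M$. Since $\cp_T^*\nu_s = \nu_{s+T}$, a change of variables $s' = s+T$ gives
\[
\cp_T^*\gamma_a - \gamma_a = (e^{T/a}-1)\int_T^\infty \frac{1}{a} e^{-s/a} \nu_s\, ds - \int_0^T \frac{1}{a} e^{-s/a} \nu_s\, ds,
\]
which is a signed measure of total variation at most $2(1-e^{-T/a})$. Hence
\[
\biggl|\int_\Omega \Phi\, d(\cp_T^*\gamma_a) - \int_\Omega \Phi\, d\gamma_a\biggr| \le 2M(1 - e^{-T/a}) \xrightarrow{a\to\infty} 0.
\]

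\emph{Step 3 (Passing to the limit).} Suppose $\gamma_{a_k}\to\gamma$ weakly for some sequence $a_k\to\infty$. Using the factorization from Section \ref{proofsec}, we write
\[
\int_\Omega \Phi\, d(\cp_T^*\gamma_{a_k}) = \int_{\Gamma''} \int_\Omega \Phi(\Psi'(f,P))\, d\gamma_{a_k}(f)\, d\lambda(P).
\]
Since $\Phi\circ\Psi'(\cdot,P)$ is bounded and continuous on $\Omega$ for every $P\in\Gamma''$ by Lemma \ref{contlmm}, weak convergence of $\gamma_{a_k}$ gives convergence of the inner integral for each $P$, and the bounded convergence theorem (using $\lambda(\Gamma'')=1$ from Lemma \ref{gprob}) yields $\int \Phi\, d(\cp_T^*\gamma_{a_k}) \to \int \Phi\, d(\cp_T^*\gamma)$. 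Combined with $\int \Phi\, d\gamma_{a_k}\to \int \Phi\, d\gamma$ and Step 2, this gives $\int \Phi\, d(\cp_T^*\gamma) = \int \Phi\, d\gamma$ for every bounded continuous $\Phi$, so $\cp_T^*\gamma = \gamma$ on the Polish space $\Omega$.

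The main conceptual obstacle is Step 2: one needs to recognize that the exponential mixing measure turns the discrete Cesaro-type cancellation of the proof of Theorem \ref{mainthm} into an exact memoryless identity up to a $O(T/a)$ error. The rest is a direct reuse of the Feller-type continuity already established for $\Psi'$.
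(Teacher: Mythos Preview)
Your proof is correct and follows essentially the same approach as the paper's own proof: the same integration-by-parts estimate using Lemma~\ref{bdlmm} for tightness, the same change of variables $s\mapsto s+T$ for approximate stationarity, and the same Feller-type continuity argument via $\Psi'$ for passing to the limit. The only cosmetic difference is that in Step~2 you package the error as an explicit total-variation bound $2(1-e^{-T/a})$, whereas the paper writes out the two residual integrals and remarks that each tends to zero; these are the same computation.
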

\begin{proof}
Let $v_a$ be an $\Omega$-valued random variable with law $\gamma_a$. Using integration by parts and Lemma \ref{bdlmm}, we have that for any $i$,
\begin{align*}
\ee|v_a(e_i)| &= \int_0^\infty a^{-1}e^{-t/a} \ee|h(t,e_i)-h(t,0)| dt \\
&= \int_0^\infty a^{-2}e^{-t/a} \biggl(\int_0^t\ee|h(s,e_i)-h(s,0)| ds\biggr)dt \\
&\le C(d) \int_0^\infty a^{-2}t e^{-t/a} dt = C(d),
\end{align*}
where $C(d)$ depends only on $d$. 
Armed with this, and proceeding as in the proof of Lemma \ref{maintight}, we can now easily complete the proof of tightness. 

Next, recall that $\nu_s$ denotes the law of $w(s,\cdot)$. Fix some $T>0$ and  a bounded continuous function $\Phi:\Omega \to \rr$. Then for any $a>0$,
\begin{align*}
&\int_\Omega \Phi(f) d(\cp_T^*\gamma_a)(f)  - \int_\Omega \Phi(f) d\gamma_a(f) \\
&=\frac{1}{a} \int_0^\infty \int_\Omega \Phi(f) e^{-s/a} d(\cp_T^*\nu_s) (f)ds - \frac{1}{a}\int_0^\infty\int_\Omega \Phi(f) e^{-s/a} d\nu_s (f)ds\\
&=\frac{1}{a} \int_0^\infty\int_\Omega \Phi(f)e^{-s/a}  d\nu_{s+T} (f)ds -\frac{1}{a} \int_0^\infty\int_\Omega \Phi(f) e^{-s/a} d\nu_s (f) ds\\ 
&=\frac{1}{a}\int_T^\infty\int_\Omega \Phi(f) (e^{-(s-T)/a} - e^{-s/a}) d\nu_{s} (f)ds \\
&\qquad \qquad \qquad - \frac{1}{a}\int_0^T\int_\Omega \Phi(f) e^{-s/a} d\nu_s (f)ds. 
\end{align*}
Using the fact that $\Phi$ is bounded, it is now easy to show that both integrals in the last expression tend to zero as $a\to \infty$. Thus, 
\[
\lim_{a\to \infty} \biggl(\int_\Omega \Phi(f) d(\cp_T^*\gamma_a)(f)  - \int_\Omega \Phi(f) d\gamma_a(f)\biggr) = 0.
\]
By the tightness of $\{\gamma_a\}_{a>0}$, there is a sequence $a_k\to\infty$ and a probability measure $\gamma$ such that $\gamma_{a_k}\to\gamma$ weakly. Thus, 
\[
\lim_{k\to \infty} \int_\Omega \Phi(f) d\gamma_{a_k}(f) = \int_\Omega \Phi(f) d\gamma(f).
\]
On the other hand, by the Feller property of $\{\cp_t\}_{t\ge 0}$ (Theorem \ref{fellerthm}), 
\begin{align*}
\lim_{k\to \infty} \int_\Omega \Phi(f) d(\cp_T^*\gamma_{a_k})(f)  &= \lim_{k\to \infty} \int_\Omega P_T \Phi(f) d\gamma_{a_k}(f) \\
&= \int_\Omega P_T \Phi(f) d\gamma (f) = \int_\Omega \Phi(f) d(\cp_T^*\gamma)(f). 
\end{align*}
Thus, $\cp_T^* \gamma=\gamma$, which means that $\gamma$ is an invariant probability measure for the semigroup $\{\cp_t\}_{t\ge 0}$. 
\end{proof}
For the next lemma, we need the following crude lower tail bound for gamma random variables.
\begin{lmm}\label{gammalmm}
Let $X$ be a gamma random variable with shape parameter $n$ and scale parameter $1$. Then for any $a\in(0,1)$, $\pp(X\le an) \le e^{(1-a+\log a)n}$. 
\end{lmm}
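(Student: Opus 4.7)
The plan is to establish this via a standard Chernoff bound, exploiting the fact that the moment generating function of a gamma random variable has a clean closed form. Concretely, since $X$ has shape $n$ and scale $1$, one has $\ee[e^{-\lambda X}] = (1+\lambda)^{-n}$ for every $\lambda>-1$, and in particular for every $\lambda>0$. This can be seen either by direct computation with the density $x^{n-1}e^{-x}/\Gamma(n)$, or by writing $X = Y_1+\cdots+Y_n$ as a sum of i.i.d.\ Exponential$(1)$ random variables and using independence with $\ee[e^{-\lambda Y_i}] = 1/(1+\lambda)$.

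Next, I would apply Markov's inequality to the nonnegative random variable $e^{-\lambda X}$: for any $\lambda>0$,
\begin{align*}
\pp(X\le an) = \pp(e^{-\lambda X} \ge e^{-\lambda an}) \le e^{\lambda an} \ee[e^{-\lambda X}] = e^{\lambda an}(1+\lambda)^{-n}.
\end{align*}
Taking logarithms, the exponent equals $n(\lambda a - \log(1+\lambda))$, and I would optimize over $\lambda>0$. Differentiating with respect to $\lambda$ and setting the derivative to zero gives $a - 1/(1+\lambda) = 0$, i.e., $\lambda_\star = (1-a)/a$, which is strictly positive precisely because $a\in(0,1)$.

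Substituting $\lambda_\star$ back gives $\lambda_\star a = 1-a$ and $\log(1+\lambda_\star) = -\log a$, so the exponent becomes $n(1-a+\log a)$, yielding the claimed inequality $\pp(X\le an) \le e^{(1-a+\log a)n}$. There is no real obstacle here: the only care needed is to verify that $\lambda_\star>0$ (so Markov applies) and that the optimization is genuinely at the stated critical point (which is immediate since the objective is convex in $\lambda$ on $(-1,\infty)$). A brief sanity check that $1-a+\log a<0$ on $(0,1)$ confirms the bound is nontrivial, though this is not needed for the proof itself.
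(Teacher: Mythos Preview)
Your proposal is correct and essentially identical to the paper's proof: the paper also applies Markov's inequality to $e^{-\theta X}$, computes $\ee(e^{-\theta X}) = (1+\theta)^{-n}$ via the representation of $X$ as a sum of $n$ i.i.d.\ exponentials, and then substitutes $\theta = (1-a)/a$ as the optimizer. The only cosmetic difference is that you also mention the alternative derivation of the MGF directly from the gamma density.
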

\begin{proof}
Recall that $X$ can be written as a sum of $n$ i.i.d.~exponential random variables $X_1,\ldots,X_n$ with mean $1$. Thus, for any $a>0$ and $\theta >0$,
\begin{align*}
\pp(X\le an) &= \pp(e^{-\theta X} \ge e^{-\theta an}) \le e^{\theta an} \ee(e^{-\theta X})\\
&= e^{\theta an} (\ee(e^{-\theta X_1}))^n = e^{\theta an} (1+\theta)^{-n}. 
\end{align*}
The result is now obtained by choosing $\theta = (1-a)/a$, which optimizes the above bound. 
\end{proof}
Fix some $T>0$. Recall the definition of $S(x,P)$ and associated notations from Section \ref{proofsec}. Let $\rho(x,P)$ be the maximum $\ell^1$ distance of any vertex in $S(x,P)$ from~$x$. The following lemma is essentially a restatement of \cite[Proposition 4.1]{seppalainen00}. We include a proof, which is slightly different than the one in \cite{seppalainen00}, to save the reader the trouble of translating notations between the two papers.
\begin{lmm}\label{rhotail}
Recall the law $\lambda$ defined in Section \ref{proofsec}. For any $x$, 
\[
\lambda(\{P\in \Gamma'': \rho(x,P) > C_1T\}) \le C_2 e^{-C_3 T},
\]
where $C_1$, $C_2$, and $C_3$ are positive constants that depend only on $d$.
\end{lmm}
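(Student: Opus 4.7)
My approach is to unwind the inductive construction of $S(x,P)$ to exhibit, on the event $\{\rho(x,P)\ge R\}$, a backward space-time chain of Poisson updates emanating from $x$, and then to control the expected number of such chains by a first-moment estimate.

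First, I would back-trace the construction. Suppose $z\in S(x,P)$ with $\|z-x\|_1\ge R$. Let $k_R$ be the step at which $z$ first enters the growing set, so $z$ is a neighbor of the site $y_{R-1}\in S_{k_R-1}$ that was selected at step $k_R$, meaning $t_{k_R}\in P_{y_{R-1}}$ and $\|y_{R-1}-x\|_1\ge R-1$. Iterating this back-tracing yields sites $y_0=x,y_1,\ldots,y_{R-1}$ with $y_{i-1}$ and $y_i$ nearest neighbors in $\zz^d$, together with decreasing times $T>t_{k_1}>t_{k_2}>\cdots>t_{k_R}\ge 0$ such that $t_{k_i}\in P_{y_{i-1}}$. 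Because $\|y_i-x\|_1\ge i$, the sites $y_0,\ldots,y_{R-1}$ are pairwise distinct, so under $\lambda$ the processes $P_{y_0},\ldots,P_{y_{R-1}}$ are independent unit-rate Poisson processes on $[0,T]$.

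Next, I would apply a first-moment bound. For a fixed nearest-neighbor lattice path $x=y_0,y_1,\ldots,y_{R-1}$, Campbell's formula gives that the expected number of ordered tuples $(s_1,\ldots,s_R)$ with $s_i\in P_{y_{i-1}}$ and $T\ge s_1>\cdots>s_R\ge 0$ equals
\[
\int_{T\ge s_1>\cdots>s_R\ge 0} ds_1\cdots ds_R=\frac{T^R}{R!},
\]
so by Markov's inequality the probability that at least one such tuple exists is at most $T^R/R!$. Since there are at most $(2d)^R$ nearest-neighbor paths of length $R$ starting at $x$, a union bound together with $R!\ge(R/e)^R$ yields
\[
\lambda\bigl(\{\rho(x,P)\ge R\}\bigr)\le \frac{(2dT)^R}{R!}\le \left(\frac{2edT}{R}\right)^R.
\]
Setting $R=\lceil C_1 T\rceil$ with $C_1:=4ed$ makes the right-hand side at most $2^{-C_1 T}$, which is of the form $C_2 e^{-C_3 T}$ after absorbing the small-$T$ regime into the constant $C_2$.

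The only point that requires care is the distinctness of the sites $y_0,\ldots,y_{R-1}$ in the extracted chain: this is what makes the Poisson processes $P_{y_0},\ldots,P_{y_{R-1}}$ independent and produces the clean integral $T^R/R!$. Fortunately, distinctness is forced by the inequality $\|y_i-x\|_1\ge i$ obtained during the back-tracing, so this is not really an obstacle. Beyond that, the argument is a routine combination of a backward space-time chain representation with a first-moment estimate.
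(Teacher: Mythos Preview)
Your overall strategy—back-trace the construction of $S(x,P)$ to exhibit a chain of Poisson updates emanating from $x$, then bound its probability by a first moment over nearest-neighbor paths—is correct and is essentially the paper's approach. However, there is a real gap in the execution.

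You assert that iterating the back-tracing $R$ times lands at $y_0 = x$. This need not hold. Following ``which site's clock rang when the current site first entered'' does produce a nearest-neighbor path from $z$ back to $x$, but its length $L$ satisfies only $L \ge R$, not $L = R$. For example, in $d=2$ with $x = (0,0)$: if the first four clock rings occur (in order) at $(0,0),(0,1),(1,1),(2,1)$, then $(2,0)$ enters $S$ at step $4$ with $R = \|(2,0)\|_1 = 2$, yet the back-trace $(2,0)\to(2,1)\to(1,1)\to(0,1)\to(0,0)$ has length $4$; after two steps you are at $(1,1)$, not $x$. Since your extracted path need not start at $x$, the union bound over the $(2d)^R$ paths from $x$ does not cover it. Your distinctness argument inherits the same problem: $\|y_i - x\|_1 \ge i$ alone does not force the $y_i$ to be distinct (two of them could sit at the same large distance); you would also need $\|y_i - x\|_1 \le i$, which is equivalent to $y_0 = x$.

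The fix is immediate. Back-trace all the way to $x$, obtaining $w_0 = z, w_1, \ldots, w_L = x$; these are automatically pairwise distinct because their first-entry steps into $S$ are strictly decreasing. Reverse to get $v_0 = x, \ldots, v_L$ with times $s_1 > \cdots > s_L$, $s_i \in P_{v_{i-1}}$; since $L \ge R$, truncate to the first $R$ sites and times, and your first-moment bound $(2d)^R T^R / R!$ applies verbatim.

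The paper handles the same issue differently: it works with \emph{lazy} nearest-neighbor paths from $x$ (steps of $\ell^1$-length $\le 1$), which absorbs the length-versus-distance discrepancy at the cost of $(2d+1)^i$ paths instead of $(2d)^i$, and then uses a Chernoff bound for the gamma distribution in place of your direct $T^R/R!$ estimate.
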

\begin{proof}
Let us fix $x$, and write $S$ instead of $S(x,P)$, etc. Let $P$ be a random element of $\Gamma''$ with law $\lambda$. As in the proof of Lemma \ref{gprob}, extend each~$P_x$ to a Poisson process $P_x'$ on $(-\infty, T]$.  Let $\cq$ be the set of all infinite lazy random walk paths on $\zz^d$ starting from $x$. A typical element of $\cq$ is an infinite sequence $q=(q_0,q_1\ldots)$ of elements of $\zz^d$ such that $q_0=x$ and $|q_{i+1}-q_{i}|_1\le 1$ for each $i\ge 0$, where $|\cdot|_1$ denotes $\ell^1$ norm. To any $q\in \cq$, let us associate a sequence of times $\tau_0(q),\tau_1(q),\cdots \in (-\infty, T]$ as follows. Start with $\tau_0(q)=T$. Having defined $\tau_i(q)$, let $\tau_{i+1}(q)$ be the largest $t\in (-\infty, \tau_i(q))$ such that  the clock at site $q_i$ rings at time $t$. Then, by the memoryless property of Poisson processes, the sequence $\tau_1(q), \tau_2(q),\ldots$ is a Poisson process on $(-\infty, T]$ with intensity $1$. 

We claim that for each $y\in S$, there is some $q\in \cq$ and some $i$ such that $q_i = y$ and $\tau_i(q) \ge 0$. To show this, we prove by induction on $k$ that for each $k$, and for each $y\in S_k$, there is some $q\in \cq$ and some $i$ such that $q_i = y$ and  $\tau_i(q)\ge t_k$. Clearly, this is true for $k=0$. Suppose that it holds for some $k$. Recall that $S_{k+1}$ is obtained by taking the union of $S_k$ and the set of neighbors of $y$, where $y$ is the element of $S_k$ where the clock rings at time $t_{k+1}$. Take any $z\in S_{k+1}\setminus S_k$, so that $z$ is a neighbor of $y$. By the induction hypothesis, there exists a lazy random walk path $q_0,\ldots,q_i$ starting at at $x$ and ending at $y$, and times $\tau_0(q),\ldots, \tau_i(q)$ defined as above, such that $\tau_i(q)\ge t_k$. Let $j$ be the number of times that the clock at $y$ rings in $[t_{k+1}, \tau_i(q))$. Then $j\ge1$, since we know that the clock at $y$ rings at time $t_{k+1}$. Define $q_{i+1}=q_{i+2}=\cdots=q_{i+j-1} = y$, and $q_{i+j} = z$. Then it is clear that $\tau_{i+1}(q)> \cdots> \tau_{i+j}(q)=t_{k+1}$. This proves that the claimed property holds for every $z\in S_{k+1}\setminus S_k$, and it holds in $S_{k+1}\cap S_k =S_k$ by the induction hypothesis and the fact that $t_k\ge t_{k+1}$. This completes the proof of the induction step.

Fix some $a\in (0,1)$, to be chosen later. Take any $y\in \zz^d$ such that $|y-x|_1 \ge T/a$, and any $q\in \cq$ and $i\ge 0$ such that $y=q_i$. Then $i\ge |y-x|_1\ge T/a$. Since $T-\tau_i(q)$ has a gamma distribution with shape parameter $i$ and scale parameter $1$, Lemma \ref{gammalmm} gives 
\begin{align*}
\pp(\tau_i(q)\ge 0) &= \pp(T-\tau_i(q) \le T) \le e^{(1-(T/i)+\log (T/i))i}\le e^{(1-a+\log a)i},
\end{align*}
where the last inequality holds because $x\mapsto 1-x+\log x$ is an increasing function in $(0,1)$ and $T/i \le a$. Since there are $(2d+1)^i$ paths of length $i$ starting from $x$, this shows that
\begin{align*}
\pp(y\in S) &\le \sum_{q,i\, : \, q_i =y} \pp(\tau_i(q)\ge 0)\\
&\le \sum_{i\ge |y-x|_1} (2d+1)^i e^{(1-a+\log a)i}. 
\end{align*}
Choosing $a$ so small that $1-a+\log a < -4d-2$, we get 
\begin{align}\label{pseq}
\pp(y\in S) \le C_1 e^{-C_2 |y-x|_1},
\end{align}
where $C_1$ and $C_2$ depend only on $d$. Note that this holds only when $|y-x|_1\ge T/a$, and $a$ depends only on $d$. Summing over all such $y$ completes the proof. 
\end{proof}
Still fixing $T$, recall the map $\Psi$ defined in Section \ref{proofsec}, which maps a surface $f$ and a collection of update times $P$ to the updated surface $\Psi(f,P)$ at time $T$. Given a finite set $D\subseteq \zz^d$, define $\Psi_D$ to be the analogous map where only the heights at $D$ are updated. 
\begin{lmm}\label{slmm}
Take any $f\in \Omega$ and $P\in \Gamma''$. Then for any finite $D$ and any $x$ such that $S(x,P)\subseteq D$, $\Psi_D(f,P)$ and $\Psi(f,P)$ are the same at $x$. 
\end{lmm}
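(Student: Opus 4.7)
My plan is to reduce Lemma \ref{slmm} to the special case $D=S(x,P)$ by a monotonicity argument, and then prove that special case by a backward induction along the sequence of times $T=t_0>t_1>\cdots>t_K$ built into the definition of $S(x,P)$.

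For the reduction, note that each ballistic deposition update at a site is a pointwise maximum of $h+1$ there with the heights at the neighbors, so processing additional updates can only raise heights. Hence $D_1\subseteq D_2$ implies $\Psi_{D_1}(f,P)\le\Psi_{D_2}(f,P)$ pointwise, and passing to the pointwise limit as the finite set $D_2$ exhausts $\zz^d$ gives $\Psi_{D_1}(f,P)\le\Psi(f,P)$. Once the equality $\Psi_{S(x,P)}(f,P)(x)=\Psi(f,P)(x)$ is established, the sandwich $\Psi_{S(x,P)}(f,P)(x)\le\Psi_D(f,P)(x)\le\Psi(f,P)(x)$ settles the lemma for every $D\supseteq S(x,P)$.

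The heart of the argument is this equality. Write $K=K(x,P)$, $t_k=t_k(x,P)$, $S_k=S_k(x,P)$, and let $y_k\in S_{k-1}$ denote the site of the update at time $t_k$, so that $S_k=S_{k-1}\cup\{y_k\pm e_i:1\le i\le d\}$. Let $h^\Psi$ and $h^S$ denote the height functions produced by $\Psi$ and $\Psi_{S(x,P)}$ respectively, both from initial condition $f$. I claim
\[
h^\Psi(t_k,z)=h^S(t_k,z)\qquad \text{for every } k\in\{K,K-1,\ldots,0\} \text{ and every } z\in S_k,
\]
which at $k=0$ yields $h^\Psi(T,x)=h^S(T,x)$. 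For the base case $k=K$, the stopping condition defining $K$ forbids any clock ring in $P_y$ during $[0,t_K)$ for $y\in S_K$, so every site of $S_K$ has height $f(z)$ up to time $t_K^-$ in either process; the single ring at $t_K$ occurs at $y_K$ with all its neighbors already in $S_K$, so the updated value $\max\{\max_i f(y_K\pm e_i),f(y_K)+1\}$ matches in the two processes. For the inductive step, the defining property of $t_{k+1}$ says no clock rings at any site of $S_k$ in the open interval $(t_{k+1},t_k)$, so heights on $S_k$ are frozen on $[t_{k+1},t_k)$ in both processes; at time $t_k$ only $y_k\in S_{k-1}\subseteq S_k$ changes, and the deposition rule computes its new height from the values at $\{y_k\}\cup\{y_k\pm e_i:1\le i\le d\}\subseteq S_{k+1}$. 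The inductive hypothesis at level $k+1$ supplies the equality of those values between $\Psi$ and $\Psi_{S(x,P)}$, so the new value at $y_k$ coincides as well, and the step closes.

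The delicate point I expect to check most carefully is that clock rings of $\Psi$ at sites outside $S_k$ during $(t_{k+1},t_k)$ cannot contaminate heights on $S_k$. The resolution is purely local: a ring at $y$ rewrites only $h(\cdot,y)$, so it cannot directly change heights elsewhere, while the defining property of $t_{k+1}$ already rules out rings at $S_k$ during $(t_{k+1},t_k)$. This locality, together with the nesting $\{y_k\pm e_i:1\le i\le d\}\subseteq S_k\subseteq S_{k+1}$ built into the construction of $S_{k+1}$, is exactly what allows the backward induction to close; combined with the monotonicity reduction it yields Lemma~\ref{slmm}.
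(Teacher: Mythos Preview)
Your proof is correct and follows essentially the same backward induction along $t_K,\ldots,t_0$ as the paper's proof. The only structural difference is that you first reduce to $D=S(x,P)$ via the monotonicity sandwich $\Psi_{S(x,P)}(f,P)(x)\le\Psi_D(f,P)(x)\le\Psi(f,P)(x)$, whereas the paper runs the induction directly comparing the full process against the $D$-restricted process for arbitrary $D\supseteq S(x,P)$; this detour is harmless but unnecessary, since the locality observation you isolate in your last paragraph (rings outside $S_k$ cannot alter heights on $S_k$) applies just as well to the $D$-restricted process for any such $D$.
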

\begin{proof}
Let us write $S$ instead of $S(x,P)$, etc. Consider the two update processes --- the first, where all sites are updated, and the second, where only sites in $D$ are updated. In the time interval $[0,t_K]$, no site in $S = S_K$ is updated in either process. Instantly after time $t_K$, a site in $S_{K-1}$ is updated. Since this update depends only on the heights in $S_K$, the updated height is the same for the two processes. Continuing inductively like this, we deduce that the final heights at $S_0 = \{x\}$ are the same for the two processes. 
\end{proof}
Take any $N$, and consider the ballistic deposition process $h_N(t,\cdot)$ starting with $h_N(0,\cdot)\equiv 0$, and growing by allowing updates only in $B_N$. Our next lemma shows that for $x$ fixed and $N$ large, $h_N(t,x)$ is equal to $h(t,x)$ with high probability if $1\ll t\ll N$.
\begin{lmm}\label{hzero}
Let $\{t_N\}_{N\ge 1}$ be a sequences of positive real numbers such that $1\ll t_N \ll N$ as $N\to\infty$. Then for any $x$, $h_N(t_N,x)- h(t_N, x) \to 0$ in probability as $N\to\infty$.
\end{lmm}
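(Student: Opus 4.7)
The plan is to couple the processes $h$ and $h_N$ via a common realization of the Poisson updates, and then use the spatial-locality bound of Lemma \ref{rhotail} to show that the coupled processes agree at $(t_N, x)$ with probability tending to one.

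Fix $x \in \zz^d$ and set $T = t_N$. Let $P \in \Gamma''$ be distributed according to $\lambda$. Realize the continuous-time process $h$ on $[0, t_N]$ (started from the zero initial condition) as $h(t_N, \cdot) = \Psi(0, P)$, and realize $h_N$ using the \emph{same} $P$ but ignoring all updates outside $B_N$; that is, $h_N(t_N, \cdot) = \Psi_{B_N}(0, P)$, where $\Psi_{B_N}$ is the partial update map introduced just before Lemma \ref{slmm}. By Lemma \ref{slmm}, $h(t_N, x) = h_N(t_N, x)$ on the event $\{S(x, P) \subseteq B_N\}$.

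By the definition of $\rho(x, P)$ as the maximum $\ell^1$-distance from $x$ to any vertex of $S(x, P)$, every $y \in S(x, P)$ satisfies $\max_j |y_j| \le \max_j |x_j| + \rho(x, P)$, so the inclusion $S(x, P) \subseteq B_N$ holds whenever $\rho(x, P) \le N - \max_j |x_j|$. Now apply Lemma \ref{rhotail} with $T = t_N$: with constants $C_1, C_2, C_3$ depending only on $d$,
\[
\lambda(\rho(x, P) > C_1 t_N) \le C_2 e^{-C_3 t_N}.
\]
Since $t_N \ll N$, for all sufficiently large $N$ we have $C_1 t_N + \max_j |x_j| \le N$, and therefore $\{\rho(x, P) \le C_1 t_N\} \subseteq \{S(x, P) \subseteq B_N\}$. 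Combining these observations, for all large $N$,
\[
\pp(h(t_N, x) \ne h_N(t_N, x)) \le C_2 e^{-C_3 t_N},
\]
which tends to $0$ since $t_N \to \infty$. As $h(t_N, x) - h_N(t_N, x)$ is integer-valued, this yields convergence to $0$ in probability (in fact, exponentially fast).

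The only inputs are the finite-propagation bound of Lemma \ref{rhotail} and the exact locality provided by Lemma \ref{slmm}; no new estimates are required. The only minor point to verify is that the $\ell^1$ radius of influence $\rho(x, P) \le C_1 t_N$ forces the whole influence set into the $\ell^\infty$ box $B_N$, which is immediate once $t_N \ll N$ and $x$ is fixed.
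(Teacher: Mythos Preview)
Your proof is correct and follows essentially the same route as the paper: couple $h$ and $h_N$ via the common Poisson updates, invoke Lemma~\ref{slmm} to reduce to the event $\{S(x,P)\subseteq B_N\}$, and then use Lemma~\ref{rhotail} together with $t_N\ll N$ and $t_N\to\infty$ to show this event has probability tending to one. The only additional detail you supply is making the $\ell^1$-to-$\ell^\infty$ inclusion explicit, which the paper leaves implicit.
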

\begin{proof}
Recall the quantity $\rho(x,P)$ defined above. Let $\rho_N$ be the value of $\rho(x,P)$ and $S_N$ be the set $S(x,P)$ when $T=t_N$ and $P$ is a random element of $\Gamma''$ with law $\lambda$ (see Section \ref{proofsec}).  By Lemma \ref{rhotail}, $\pp(\rho_N > C_1 t_N) \le C_2 e^{-C_3 t_N}$, where $C_1$, $C_2$ and $C_3$ are positive constants that depend only on $d$. But if $\rho_N \le C_1 t_N$, and if $N$ is large enough, then $S_N \subseteq B_N$, because $t_N \ll N$ as $N\to \infty$. And if $S_N \subseteq B_N$, then by Lemma \ref{slmm}, $h_N(t_N, x) = h(t_N,x)$. Since $t_N \gg 1$ as $N\to \infty$, this proves the claim.
\end{proof}
The next lemma says the same thing as the previous one, except that now $t_N$ is randomly chosen from an exponential distribution. 
\begin{lmm}\label{expconv}
Let $\{a_N\}_{N\ge 1}$ be a sequence of positive real numbers such that $1\ll a_N \ll N$ as $N\to \infty$. For each $N$, let $t_N$ be an exponential random variable with mean $a_N$. Then for any $x$, $h_N(t_N,x)- h(t_N, x) \to 0$ in probability as $N\to\infty$.
\end{lmm}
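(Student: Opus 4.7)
The plan is to condition on $t_N$ and reduce to the deterministic tail estimate already supplied by Lemma~\ref{rhotail}, which is precisely what drove Lemma~\ref{hzero}. First I would observe that for all sufficiently large $N$, the event $\{h_N(t_N,x)\ne h(t_N,x)\}$ is contained in $\{t_N > N/(2C_1)\} \cup \{\rho(x,P) > C_1 t_N\}$. Indeed, if both of these fail then $\rho(x,P) \le C_1 t_N \le N/2$, and since $x$ is fixed we have $\rho(x,P) \le N - \|x\|_\infty$ for $N$ large, hence $S(x,P) \subseteq B_N$, and Lemma~\ref{slmm} forces $h_N(t_N,x)=h(t_N,x)$.

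Second, I would bound each term separately. Since $t_N$ is exponential with mean $a_N$,
\[
\P(t_N > N/(2C_1)) = \exp\!\biggl(-\frac{N}{2C_1 a_N}\biggr) \xrightarrow[N\to\infty]{} 0,
\]
because the hypothesis $a_N \ll N$ gives $N/a_N \to \infty$. For the second event, using the independence of $t_N$ from the Poisson update process $P$ and conditioning on $t_N = t$, Lemma~\ref{rhotail} applied with $T = t$ yields
\[
\P(\rho(x,P) > C_1 t_N) \le \int_0^\infty C_2 e^{-C_3 t}\cdot \frac{1}{a_N} e^{-t/a_N}\, dt = \frac{C_2}{C_3 a_N + 1} \xrightarrow[N\to\infty]{} 0,
\]
using the other hypothesis $a_N \to \infty$. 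Adding these two bounds completes the proof.

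The two hypotheses on $a_N$ match the two bad events cleanly: $a_N \ll N$ controls the probability that the time horizon $t_N$ itself is large enough to let $S(x,P)$ reach $\partial B_N$, while $a_N \to \infty$ keeps the exponential tail from Lemma~\ref{rhotail} integrable against the (slowly decaying) exponential density of $t_N$. I do not anticipate a serious obstacle; the only mildly nonroutine point is identifying the right two-event decomposition so that one can plug directly into Lemma~\ref{rhotail} rather than re-derive an analogue of Lemma~\ref{hzero} from scratch for a random time horizon.
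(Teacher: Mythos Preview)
Your argument is correct. The decomposition into $\{t_N > N/(2C_1)\}$ and $\{\rho(x,P) > C_1 t_N\}$ is valid, Lemma~\ref{slmm} is applied correctly once $S(x,P)\subseteq B_N$, and both probability bounds are computed properly using, respectively, $a_N\ll N$ and $a_N\to\infty$.

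The paper takes a slightly different and shorter route: it writes
\[
\pp(h_N(t_N,x)\ne h(t_N,x)) = \int_0^\infty e^{-s}\,\pp(h_N(a_N s,x)\ne h(a_N s,x))\,ds
\]
after the substitution $t=a_N s$, observes that for each fixed $s>0$ the sequence $a_N s$ satisfies $1\ll a_N s\ll N$, invokes Lemma~\ref{hzero} pointwise in $s$, and finishes by dominated convergence. So the paper reduces the random time horizon to the deterministic case already proved, whereas you go back directly to Lemma~\ref{rhotail} and Lemma~\ref{slmm} with an explicit two-event split. Your approach has the minor advantage of producing a quantitative rate (namely $e^{-N/(2C_1 a_N)} + C_2/(C_3 a_N + 1)$), while the paper's dominated-convergence argument is nonquantitative but avoids repeating the content of Lemma~\ref{hzero}.
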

\begin{proof}
Note that
\begin{align*}
\pp(h_N(t_N,x)\ne  h(t_N, x)) &= \int_0^\infty a_N^{-1} e^{-t/a_N} \pp(h_N(t,x)\ne  h(t, x)) dt\\
&= \int_0^\infty e^{-s}\pp( h_N(a_Ns, x) \ne h(a_Ns, x))ds.
\end{align*}
By Lemma \ref{hzero}, $\pp( h_N(a_Ns, x) \ne h(a_Ns, x))\to 0$ as $N\to \infty$ for every $s>0$. By the dominated convergence theorem, this completes the proof. 
\end{proof}
The above lemmas show that for fixed $x$, $h_N(t,x) = h(t,x)$ with high probability if $1\ll t\ll N$, where $t$ may be fixed or random. The next lemma shows that this suffices for deducing that $h_N(t,\cdot)$ is close to $h(t, \cdot)$ on $\Omega$.
\begin{lmm}\label{problmm}
Let $\{f_N\}_{N\ge 1}$ be a sequence of random elements of $\Omega$. Then $f_N\to 0$ in probability if and only if $f_N(x)\to 0$ in probability for each $x$. 
\end{lmm}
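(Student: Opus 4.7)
The plan is to use the fact that $\Omega = \zz^{\zz^d}$ is metrizable under the product topology, since $\zz^d$ is countable. Concretely, after enumerating $\zz^d = \{x_1, x_2, \ldots\}$, I would equip $\Omega$ with the metric
\[
d(f,g) := \sum_{k=1}^\infty 2^{-k}\, \mathbf{1}_{f(x_k)\ne g(x_k)},
\]
which induces the product topology (this uses crucially that $\zz$ is discrete, so that coordinatewise inequality is open). Convergence in probability in $\Omega$ then means $d(f_N, 0) \to 0$ in probability.

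The forward direction is essentially trivial: for each fixed $x$, the projection $\pi_x : \Omega \to \zz$, $f \mapsto f(x)$, is continuous, so $f_N \to 0$ in probability in $\Omega$ forces $f_N(x) = \pi_x(f_N) \to 0$ in probability in $\zz$ by the continuous mapping theorem for convergence in probability.

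For the reverse direction, I would fix $\ve > 0$ and choose $K$ large enough that $\sum_{k>K} 2^{-k} < \ve$. On the event $\{f_N(x_k)=0 \text{ for all } k\le K\}$, only the tail contributes to $d(f_N,0)$, so $d(f_N, 0) < \ve$. Therefore
\[
\P(d(f_N,0) > \ve) \le \P\Bigl(\exists k\le K : f_N(x_k) \ne 0\Bigr) \le \sum_{k=1}^K \P(f_N(x_k)\ne 0).
\]
Since $\zz$ is discrete, the hypothesis $f_N(x_k)\to 0$ in probability is equivalent to $\P(f_N(x_k)\ne 0)\to 0$. The right-hand side above is a finite sum of such terms, hence tends to $0$, completing the proof.

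There is no real obstacle here beyond writing down the right metric; the only subtle point is recognizing that the target space $\zz$ being discrete lets one replace the $\ve$-ball condition $|f_N(x_k)| < 1$ by the clean event $\{f_N(x_k) = 0\}$, which is exactly what makes the finite-sum union bound go through.
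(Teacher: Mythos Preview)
Your proposal is correct and follows essentially the same strategy as the paper: metrize the product topology, truncate the sum defining the metric at a finite cutoff so that the tail is below $\ve$, and then use a union bound over the finitely many remaining coordinates. The only cosmetic difference is your choice of metric $\sum_k 2^{-k}\mathbf{1}_{f(x_k)\ne g(x_k)}$, which exploits the discreteness of $\zz$, in place of the paper's $\sum_x 2^{-|x|}\frac{|f(x)-g(x)|}{1+|f(x)-g(x)|}$; this lets you write the finite-coordinate step as the clean event $\{f_N(x_k)=0\}$ rather than an $\ve$-ball condition, but the underlying argument is identical.
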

\begin{proof}
It is easy to check that the following is a metric for the product topology on~$\Omega$:
\[
d(f,g) := \sum_{x\in \zz^d} 2^{-|x|} \frac{|f(x)-g(x)|}{1+|f(x)-g(x)|}. 
\]
Suppose that $f_N(x) \to 0$ in probability for each $x$. Take any $\ve >0$. Find $R>0$ so large that
\[
\sum_{x\, : \, |x|>R} 2^{-|x|} \le \frac{\ve}{2}. 
\] 
Note that 
\begin{align*}
\pp(d(f_N, 0) > \ve) &\le \pp\biggl(\sum_{x\, : \, |x|\le R} 2^{-|x|} \frac{|f_N(x)|}{1+|f_N(x)|} > \frac{\ve}{2} \biggr) \\
&\qquad \qquad + \pp\biggl(\sum_{x\, : \, |x|> R} 2^{-|x|} \frac{|f_N(x)|}{1+|f_N(x)|} > \frac{\ve}{2} \biggr).
\end{align*}
But, by the choice of $R$, the second event on the right is impossible. Thus,
\begin{align*}
\pp(d(f_N, 0) > \ve) &\le \sum_{x\, : \, |x|\le R} \pp\biggl(2^{-|x|} \frac{|f_N(x)|}{1+|f_N(x)|} > \frac{\ve}{2L}\biggl),
\end{align*}
where $L$ is the number of $x$ such that $|x|\le R$. Since $R$ and $L$ are fixed, and $f_N(x)\to 0$ in probability as $N\to \infty$, the sum on the right tends to zero as $N\to \infty$. Thus, $f_N \to 0$ in probability. The proof of the converse implication is similar (and easier).
\end{proof}
Combining Lemma \ref{tightnew}, Lemma \ref{expconv} and Lemma \ref{problmm}, we arrive at the following corollary.
\begin{cor}\label{newcor}
Let $\{a_N\}_{N\ge 1}$ be a sequence of positive real numbers such that $1\ll a_N \ll N$ as $N\to \infty$. For each $N$, let $t_N$ be an exponential random variable with mean $a_N$. For each $t$, let $w_N(t,\cdot) := h_N(t,\cdot)-h_N(t,0)$. Then $\{w_N(t_N,\cdot)\}_{N\ge 1}$ is a tight family of $\Omega$-valued random variables, and any subequential weak limit of the laws of these random variables is a stationary probability measure for the Markov semigroup $\{\cp_t\}_{t\ge 0}$.
\end{cor}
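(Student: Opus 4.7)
The plan is to deduce the corollary from Lemma \ref{tightnew}, Lemma \ref{expconv}, and Lemma \ref{problmm} by comparing the restricted process $h_N$ with the full-lattice process $h$ under the natural Poissonian coupling.

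First, I would observe that if we continue to denote by $w(t,\cdot) = h(t,\cdot)-h(t,0)$ the height-shifted process on all of $\zz^d$, then the law of $w(t_N,\cdot)$ is precisely $\gamma_{a_N}$, since $t_N$ is exponentially distributed with mean $a_N$. Because $a_N \to \infty$, Lemma \ref{tightnew} already tells us that $\{w(t_N,\cdot)\}_{N\ge 1}$ is tight in $\Omega$ and that every subsequential weak limit is a stationary measure for $\{\cp_t\}_{t\ge 0}$. So the entire task reduces to transferring both conclusions from $w(t_N,\cdot)$ to $w_N(t_N,\cdot)$.

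Second, I would couple $h_N$ and $h$ using the same Poisson update processes on $\zz^d$ (with $h_N$ ignoring all updates outside $B_N$). Lemma \ref{expconv} yields, for each fixed $x\in \zz^d$, that $h_N(t_N,x)-h(t_N,x) \to 0$ in probability as $N\to\infty$. Applying this at $x$ and at the origin and subtracting,
\[
w_N(t_N,x)-w(t_N,x) = \bigl(h_N(t_N,x)-h(t_N,x)\bigr) - \bigl(h_N(t_N,0)-h(t_N,0)\bigr) \to 0
\]
in probability for every $x$. Lemma \ref{problmm} then upgrades this pointwise convergence to convergence in probability of $w_N(t_N,\cdot)-w(t_N,\cdot)$ to $0$ as $\Omega$-valued random variables.

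Third, I would combine the two ingredients via a Slutsky-type argument. Since $w(t_N,\cdot)$ is tight and the difference $w_N(t_N,\cdot)-w(t_N,\cdot)$ converges to $0$ in probability, the family $\{w_N(t_N,\cdot)\}_{N\ge 1}$ is tight. Moreover, given any subsequence $N_k$ along which $w_{N_k}(t_{N_k},\cdot)$ converges weakly to some $\mu$, I would use tightness to extract a further subsequence along which $w(t_{N_k},\cdot)$ also converges weakly, say to $\mu'$; the vanishing-in-probability difference forces $\mu=\mu'$ (e.g., by testing against bounded uniformly continuous functions in the Polish-space metric from Lemma \ref{problmm}), and Lemma \ref{tightnew} identifies $\mu'$ as a stationary measure for $\{\cp_t\}_{t\ge 0}$. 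The only mildly delicate point is verifying that the coupling used in Lemma \ref{expconv} automatically couples $w_N$ with $w$ in the sense above, but this is immediate from the construction of $\Psi$ in Section \ref{proofsec} together with Lemma \ref{slmm}, which guarantees that on the event $\{S(x,P)\subseteq B_N\}$ the two height functions agree at $x$.
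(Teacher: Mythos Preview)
Your proposal is correct and follows essentially the same route as the paper: use the coupling from Lemma \ref{expconv} together with Lemma \ref{problmm} to show $w_N(t_N,\cdot)-w(t_N,\cdot)\to 0$ in probability in $\Omega$, and then transfer the tightness and stationarity conclusions for $\{w(t_N,\cdot)\}$ supplied by Lemma \ref{tightnew}. The only difference is presentational---you spell out the Slutsky-type subsequence argument that the paper compresses into a single sentence.
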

\begin{proof}
By Lemma \ref{expconv} and Lemma \ref{problmm}, $h_N(t_N,\cdot)-h(t_N,\cdot)\to 0$ in probability on $\Omega$. Moreover, $h_N(t_N,0) - h(t_N,0)$ also tends to zero in probability. Thus, $w_N(t_N,\cdot)-w(t_N, \cdot)\to 0$ in probability. By Lemma \ref{tightnew}, $\{w(t_N,\cdot)\}_{N\ge 1}$ is a tight family, and the law of any subsequential weak limit is a stationary probability measure for  $\{\cp_t\}_{t\ge 0}$. Therefore, these properties also hold for the sequence $\{w_N(t_N,\cdot)\}_{N\ge 1}$. 
\end{proof}
We are now ready to complete the proof of Theorem \ref{simulthm}. There are two key observations. First, note that if we only update in the finite set $B_N$, then the updates can be ordered as an increasing sequence, and this ordered sequence behaves  just as if we are picking vertices uniformly at random from $B_N$ at each turn, and updating the height at the chosen vertex. In other words, we get the discrete update process of Theorem \ref{simulthm}. Moreover, by time $t$, the total number of updates in $B_N$ has a Poisson distribution with mean $|B_N| t$. Thus, if a time $T$ is chosen at random from the exponential distribution with mean $a_N$, and $K$ is the number of updates in $B_N$ within time $t$, then for any $j\ge 0$, 
\begin{align*}
\pp(K=j)  &= \int_0^\infty a_N^{-1}e^{-t/a_N} \pp(K=j\mid T=t) dt\\
&=  \int_0^\infty a_N^{-1}e^{-t/a_N} e^{-|B_N|t}\frac{(|B_N|t)^j}{j!} dt\\
&= \frac{(|B_N|a_N)^j}{(|B_N|a_N+1)^{j+1}},
\end{align*}
where the last step is computed using the gamma integral. 
Thus, $K$ has a geometric distribution with success probability $1/(|B_N|a_N+1)$. If we equate this to the quantity $p(N)$ from the statement of Theorem \ref{simulthm}, then the condition $1\ll a_N \ll N$ needed for applying Corollary \ref{newcor} holds if and only if $N^{-d-1}\ll p(N)\ll N^{-d}$. This completes the proof of Theorem \ref{simulthm}.

\section{Proof of Theorem \ref{fellerthm}}\label{fellerproofsec}
Recall all notations from Section \ref{proofsec}. We will first show that the semigroup $\{\cp_t\}_{t\ge 0}$ has the Feller property. Take any bounded continuous function $\Phi:\Omega \to \rr$, and any $T>0$. Our objective is to show that $\cp_T \Phi$ is also a bounded continuous function. It is obviously bounded, so the task is to show that it is continuous.

Let $P$ be drawn from the probability measure $\lambda$. Take any $f\in \Omega$. Recall that $\Psi'(f,P)$ is the value of $w(T,\cdot)$ if we start from initial configuration $w(0,\cdot) = f$ and apply the deposition process $P$ to obtain the height function at time $T$. Take a sequence $f_n \to f$. Since $P\in \Gamma''$ with probability one, and $\Psi'$ is continuous on $\Omega \times \Gamma''$ (by Lemma \ref{contlmm} and the definition of $\Psi'$), we have that $\Psi'(f_n,P)\to \Psi'(f,P)$ almost surely as $n\to\infty$. Thus, since $\Phi$ is continuous, the bounded convergence theorem shows that
\begin{align*}
\cp_T\Phi(f_n) &= \ee(\Phi(\Psi'(f_n,P))) \to \ee(\Phi(\Psi'(f,P))) = \cp_T \Phi(f)
\end{align*}
as $n\to \infty$. This proves the Feller property of $\{\cp_t\}_{t\ge 0}$.

Next, we show that $\{\cp_t\}_{t\ge 0}$ does not have the strong Feller property. For that, we have to exhibit a bounded measurable function $\Phi:\Omega \to \rr$, and some $T>0$, such that $\cp_T \Phi$ is not continuous. We will, in fact, produce a $\Phi$ such that $\cp_T \Phi$ is not continuous for any $T> 0$. By the Feller property, it suffices to show this for all $T\ge 1$. In this proof, $C_1,C_2,\ldots$ will denote arbitrary positive constants that depend only on $d$, whose values may change from line to line. 

Let $A$ be the set of all $f\in \Omega$ such that 
\[
\limsup_{u\to\infty} \frac{f(ue_1)}{u} \ge 1.
\]
It is easy to see that $A$ is a measurable subset of $\Omega$. Let $\Phi := 1_A$. Take any $T\ge 1$. We will now show that $\cp_T \Phi$ is not continuous at $f\equiv 0$. To prove this, we will first show that $\cp_T \Phi(f) = 0$, and then exhibit a sequence $f_n \to f$ such that $\cp_T \Phi(f_n) =1$ for each $n$. We need the following simple lemma.
\begin{lmm}\label{poissonlmm}
Take any $T\ge 1$, as above. If $X$ is a \textup{Poisson}$(T)$ random variable, then for any $x>0$,
\[
\pp(X \ge x) \le Ce^{-x/T},
\]
where $C$ does not depend on $T$. 
\end{lmm}
\begin{proof}
Note that 
\begin{align*}
\pp(X\ge x) &\le e^{-x/T} \ee(e^{X/T}) \\
&= e^{-x/T} \sum_{k=0}^\infty e^{-T}\frac{(e^{1/T} T)^k}{k!} = e^{-x/T} e^{T(e^{1/T} - 1)}.
\end{align*}
To complete the proof, note that $T(e^{1/T}-1)$ can be bounded by a constant that does depend on $T$, since $T\ge 1$. 
\end{proof}

Let all notations be as in Sections \ref{proofsec} and \ref{simulproof}. Draw $P$ from $\lambda$, and let $g := \Psi(f,P)$ and $g' := \Psi'(f,P)$. Take any $x\in \zz^d$. By equation \eqref{pseq}, we know that there are positive constants $C_1$, $C_2$, and $a$ such that if $|y-x|_1\ge T/a$, then $\pp(y\in S) \le C_1 e^{-C_2|y-x|_1}$, where $S = S(x,P)$.  Letting $\rho(x,P)$ be as in Lemma \ref{rhotail}, this estimate shows that if $R \ge T/a$, then 
\begin{align}
\pp(\rho(x,P) \ge R) &\le \sum_{y\, :\, |y-x|_1 \ge R} \pp(y\in S)\notag\\
&\le  \sum_{y\, :\, |y-x|_1 \ge R} C_1 e^{-C_2|y-x|_1} \le C_3 e^{-C_4R}. \label{rhobound}
\end{align}
Now, from the proof of Lemma \ref{slmm}, it is easy to see that 
\begin{align}\label{gxpy}
g(x)\le \sum_{y\in S} |P_y|.
\end{align}
Take any $R\ge T/a$, and let $B(x,R)$ denote the $\ell^1$ ball of radius $R$ centered at $x$. The above inequality shows that if $g(x) \ge R^{d+1}$ and $S\subseteq B(x,R)$, then for some $y\in B(x,R)$, 
\[
|P_y| \ge \frac{R^{d+1}}{|B(x,R)|} \ge C_1 R. 
\]
Thus, by the fact that $|P_y|$ is a Poisson$(T)$ random variable for each $y$, and by Lemma~\ref{poissonlmm}, we get
\begin{align*}
\pp(g(x) \ge R^{d+1}) &\le \sum_{y\in B(x,R)} \pp(|P_y|\ge C_1 R) + \pp(\rho(x,P) > R)\\
&\le C_1 R^d e^{-C_2 R/T} + C_3 e^{-C_4 R}\le C_5 T^de^{-C_6R/T},
\end{align*}
where the last inequality holds because $x^d e^{-x} \le C_1e^{-C_2x}$. 
Since $g'(x)=g(x)-g(0)\le g(x)$, this shows that if $|x|_1^{1/(2d+2)} \ge T/a$, then 
\begin{align*}
\pp(g'(x)\ge \sqrt{|x|_1}) &\le C_1 T^de^{-C_2|x|_1^{1/(2d+2)} /T}.
\end{align*}
Consequently, if $R$ is so large that $R^{1/(2d+2)} \ge T/a$ and $\sqrt{R} \le R/2$, then 
\begin{align*}
\pp\biggl(\sup_{x\, : \, |x|_1\ge R} \frac{g'(x)}{|x|_1} \ge \frac{2}{3}\biggr) &\le \sum_{x\, : \, |x|_1\ge R}\pp(g'(x) \ge \sqrt{|x|_1})\\
&\le \sum_{k=0}^\infty \sum_{x\,: \, 2^kR\le |x|_1< 2^{k+1}R}C_1 T^de^{-C_2|x|_1^{1/(2d+2)} /T} \\
&\le C_1 T^dR^{d} e^{-C_2R^{1/(2d+2)}/T}.
\end{align*}
This shows that
\begin{align*}
0\le \cp_T \Phi(f) &= \pp(g' \in A) \\
&\le \lim_{R\to\infty} \pp\biggl(\sup_{x\, : \, |x|_1\ge R} \frac{g'(x)}{|x|_1} \ge \frac{2}{3}\biggr) = 0.
\end{align*}
Next, take any positive integer $n$. For $x = (x_1,\ldots,x_d)\in \zz^d$, define 
\[
f_n(x) := 
\begin{cases}
x_1  &\text{ if } x> n,\\
0 &\text{ otherwise.}
\end{cases}
\]
Define $g_n := \Psi(f_n,P)$ and $g_n' := \Psi'(f_n, P)$. Just like \eqref{gxpy}, we now have
\[
\min_{y\in S} f_n(y) \le g_n(x) \le \sum_{y\in S} |P_y| + \max_{y\in S} f_n(y).
\]
Consequently,
\begin{align*}
g_n'(x) &= g_n(x)-g_n(0) \\
&\ge \min_{y\in S(x,P)} f_n(y) - \sum_{y\in S(0, P)} |P_y| - \max_{y\in S(0,P)} f_n(y).
\end{align*}
By a similar argument as above, using \eqref{rhobound}, it follows that the second and third terms on the right are finite almost surely. Thus, we have that with probability one,
\[
\limsup_{u\to\infty } \frac{g_n'(ue_1)}{u} = \limsup_{n\to\infty} \frac{\min_{y\in S(ue_1,P)} f_n(y) }{u}.
\]
For any $u\ge n$ and $y\in S(ue_1,P)$, we have
\begin{align*}
f_n(y) &\ge y_1 - n \ge u - \rho(ue_1,P) - n. 
\end{align*} 
Again, using the tail bound \eqref{rhobound}, it is not hard to show that with probability one, 
\[
\lim_{u\to\infty} \frac{\rho(ue_1,P)}{u} = 0. 
\]
Combining all of the above, we see that $g_n'\in A$ with probability one. Thus, $\cp_T \Phi(f_n) = 1$ for all $n$. But $f_n \to f$ in the product topology on $\Omega$, and as shown above, $\cp_T \Phi(f)=0$. This shows that $\cp_T \Phi$ is not a continuous map.

\section{Proof of Theorem \ref{uithm}}
As before, $C_1,C_2,\ldots$ will denote positive constants that depend only on $d$, whose values may change from line to line. Consider the initial conditions $h(0, x) = x_1$ and $h'(0, x) = x_1 - 1$. Let us couple two ballistic deposition processes with these two initial conditions using the same deposition events. Then $h(t,x) - h'(t,x)=1$ for all $t$ and $x$, because $h(0,x) = h'(0,x)+1$ for all $x$. But it is also clear that $\{h(t,x)\}_{t\ge 0, x\in \zz^d}$ has the same law as $\{h'(t,x+e_1)\}_{t\ge 0, x\in \zz^d}$, because $h(0,x) = h'(0,x+e_1)$ for all $x$. Thus,
\begin{align}
&\ee(h(t,x+e_1)) - \ee(h(t,x)) \notag \\
&= \ee(h'(t,x+e_1)+1) - \ee(h'(t,x+e_1)) = 1. \label{hvalue}
\end{align}
By a similar logic, 
\begin{align}\label{hvalue2}
\ee(h(t, x+e_i)) = \ee(h(t,x)) \ \text{ for any $i\ne 1$.}
\end{align}
Next, we claim that 
\begin{align}\label{toshow}
\ee(h(t,0))\le Ct
\end{align}
for all $t\ge 1$, where $C$ depends only on $d$. To prove this, take any real number $T\ge 1$ and nonnegative  integer $R$. Define two new processes, $h_R$ and $g$, which use the same updates as $h$, but starting from initial conditions $h_R(0,\cdot)$ and $g(0,\cdot)$, defined as $g(0,\cdot)\equiv0$ and 
\[
h_R(0,x) =
\begin{cases}
R &\text{ if } x_1\le R,\\
x_1 &\text{ if } x_1 > R.
\end{cases}
\]
Let $P$, $S = S(0,P)$, and $\rho = \rho(0,P)$ be as in Section \ref{simulproof}. Then by Lemma \ref{slmm} and monotonicity, it is easy to see that
\[
h(T,0) \le h_\rho(T,0) = g(T,0) + \rho.
\]
It follows from~\eqref{rhobound} that $\ee(\rho)\le C_1 T$, and by Lemma \ref{penroselmm}, $\ee(g(T,0))\le C_2 T$.  Using these in the above display, we get \eqref{toshow}.

Armed with \eqref{toshow}, it is now easy to show that Lemmas \ref{derivlmm} and \ref{integratelmm} hold for the initial state $h(0,x)=x_1$. As for Lemma \ref{bdlmm}, the first equation in the proof of that lemma holds for the initial state $h(0,x) =x_1$ because of equations~\eqref{hvalue} and~\eqref{hvalue2}. The proof of the lemma can be completed as before, using \eqref{toshow} in the last step. This completes the proof of \eqref{newavg}.  

Next, it is easy to see how the proof of Lemma \ref{maintight} also goes through, because the gradient process  under the initial condition $h(0,x)=x_1$ is translation invariant. From this, we can proceed as in the proof of Theorem \ref{mainthm} to show that if $U(T)$ is chosen uniformly at random from $[0,T]$, then the family of random surfaces $\{w(U(T),\cdot)\}_{T\ge 0}$ is tight, and the law of any subsequential weak limit is an invariant probability measure for our Markov process. If, now, the condition~\eqref{newavg2} holds, then it means that the family of random variables $\{w(U(T), e_1)\}_{T\ge 0}$ is uniformly integrable. But by \eqref{hvalue}, $\ee(w(U(T), e_1)) = 1$ for any $T$. Thus, if $q$ is a random surface drawn from an invariant measure obtained as above, then $\ee(q(e_1))=1$. The gradient field for a surface drawn from this invariant measure cannot be invariant under lattice symmetries (specifically, under the map $x \to -x$) like the one obtained in Theorem \ref{mainthm}. Thus, our Markov  process has more than one invariant measure if~\eqref{newavg2} is valid.

\section*{Acknowledgements}
The author is grateful to the anonymous referees for a number of useful suggestions and references.

\bibliographystyle{plainnat}
\bibliography{myrefs}

\begin{thebibliography}{32}
\providecommand{\natexlab}[1]{#1}
\providecommand{\url}[1]{\texttt{#1}}
\expandafter\ifx\csname urlstyle\endcsname\relax
  \providecommand{\doi}[1]{doi: #1}\else
  \providecommand{\doi}{doi: \begingroup \urlstyle{rm}\Url}\fi

\bibitem[Atar et~al.(2001)Atar, Athreya, and Kang]{ataretal01}
Rami Atar, Siva Athreya, and Min Kang.
\newblock Ballistic deposition on a planar strip.
\newblock \emph{Electronic Communications in Probability}, 6:\penalty0 31--38,
  2001.

\bibitem[Bakhtin and Li(2019)]{bakhtinli19}
Yuri Bakhtin and Liying Li.
\newblock Thermodynamic limit for directed polymers and stationary solutions of
  the {B}urgers equation.
\newblock \emph{Communications on {P}ure and {A}pplied {M}athematics},
  72\penalty0 (3):\penalty0 536--619, 2019.

\bibitem[Barraquand and Le~Doussal(2021)]{barraquandledoussal21}
Guillaume Barraquand and Pierre Le~Doussal.
\newblock Steady state of the {KPZ} equation on an interval and {L}iouville
  quantum mechanics.
\newblock \emph{arXiv preprint arXiv:2105.15178}, 2021.

\bibitem[Braun(2020)]{braun20}
Georg Braun.
\newblock On the growth of a ballistic deposition model on finite graphs.
\newblock \emph{arXiv preprint arXiv:2001.09836}, 2020.

\bibitem[Bryc and Kuznetsov(2021)]{bryckuznetsov21}
W{\l}odek Bryc and Alexey Kuznetsov.
\newblock Markov limits of steady states of the {KPZ} equation on an interval.
\newblock \emph{arXiv preprint arXiv:2109.04462}, 2021.

\bibitem[Bryc et~al.(2021)Bryc, Kuznetsov, Wang, and Weso{\l}owski]{brycetal21}
W{\l}odek Bryc, Alexey Kuznetsov, Yizao Wang, and Jacek Weso{\l}owski.
\newblock Markov processes related to the stationary measure for the open {KPZ}
  equation.
\newblock \emph{arXiv preprint arXiv:2105.03946}, 2021.

\bibitem[Cannizzaro and Hairer(2020)]{cannizzarohairer20}
Giuseppe Cannizzaro and Martin Hairer.
\newblock The brownian castle.
\newblock \emph{arXiv preprint arXiv:2010.02766}, 2020.

\bibitem[Chatterjee(2021{\natexlab{a}})]{chatterjee21}
Sourav Chatterjee.
\newblock Universality of deterministic {KPZ}.
\newblock \emph{arXiv preprint arXiv:2102.13131}, 2021{\natexlab{a}}.

\bibitem[Chatterjee(2021{\natexlab{b}})]{chatterjee21b}
Sourav Chatterjee.
\newblock Superconcentration in surface growth.
\newblock \emph{arXiv preprint arXiv:2103.09199}, 2021{\natexlab{b}}.

\bibitem[Chatterjee(2021{\natexlab{c}})]{chatterjee21b1}
Sourav Chatterjee.
\newblock Weak convergence of directed polymers to deterministic {KPZ} at high
  temperature.
\newblock \emph{arXiv preprint arXiv:2105.05933}, 2021{\natexlab{c}}.

\bibitem[Chatterjee(2021{\natexlab{d}})]{chatterjee21c}
Sourav Chatterjee.
\newblock Local {KPZ} behavior under arbitrary scaling limits.
\newblock \emph{arXiv preprint arXiv:2110.01062}, 2021{\natexlab{d}}.

\bibitem[Chatterjee and Souganidis(2021)]{chatterjeesouganidis21}
Sourav Chatterjee and Panagiotis~E. Souganidis.
\newblock Convergence of deterministic growth models.
\newblock \emph{arXiv preprint arXiv:2108.00538}, 2021.

\bibitem[Corwin(2016)]{corwin16}
Ivan Corwin.
\newblock {K}ardar--{P}arisi--{Z}hang universality.
\newblock \emph{Notices of the AMS}, 63\penalty0 (3):\penalty0 230--239, 2016.

\bibitem[Corwin and Knizel(2021)]{corwinknizel21}
Ivan Corwin and Alisa Knizel.
\newblock Stationary measure for the open {KPZ} equation.
\newblock \emph{arXiv preprint arXiv:2103.12253}, 2021.

\bibitem[Da~Prato and Zabczyk(1996)]{dapratozabczyk96}
Giuseppe Da~Prato and Jerzy Zabczyk.
\newblock \emph{Ergodicity for Infinite Dimensional Systems}.
\newblock Cambridge University Press, 1996.

\bibitem[Dunlap(2020)]{dunlap20}
Alexander Dunlap.
\newblock Existence of stationary stochastic {B}urgers evolutions on
  $\mathbf{R}^2$ and $\mathbf{R}^3$.
\newblock \emph{Nonlinearity}, 33\penalty0 (12):\penalty0 6480, 2020.

\bibitem[Dunlap et~al.(2021)Dunlap, Graham, and Ryzhik]{dunlapetal21}
Alexander Dunlap, Cole Graham, and Lenya Ryzhik.
\newblock Stationary solutions to the stochastic {B}urgers equation on the
  line.
\newblock \emph{Communications in Mathematical Physics}, 382\penalty0
  (2):\penalty0 875--949, 2021.

\bibitem[Komorowski et~al.(2010)Komorowski, Peszat, and
  Szarek]{komorowskietal10}
Tomasz Komorowski, Szymon Peszat, and Tomasz Szarek.
\newblock On ergodicity of some {M}arkov processes.
\newblock \emph{The Annals of Probability}, 38\penalty0 (4):\penalty0
  1401--1443, 2010.

\bibitem[Liggett(1985)]{liggett85}
Thomas~M. Liggett.
\newblock \emph{Interacting particle systems}.
\newblock Springer-Verlag, Berlin, 1985.

\bibitem[Mansour et~al.(2019)Mansour, Rastegar, and
  Roitershtein]{mansouretal19}
Toufik Mansour, Reza Rastegar, and Alexander Roitershtein.
\newblock On ballistic deposition process on a strip.
\newblock \emph{Journal of Statistical Physics}, 177\penalty0 (4):\penalty0
  626--650, 2019.

\bibitem[Matetski et~al.(2021)Matetski, Quastel, and Remenik]{matetskietal21}
Konstantin Matetski, Jeremy Quastel, and Daniel Remenik.
\newblock The kpz fixed point.
\newblock \emph{Acta Mathematica}, 227\penalty0 (1):\penalty0 115--203, 2021.

\bibitem[Pagnani and Parisi(2015)]{pagnaniparisi15}
Andrea Pagnani and Giorgio Parisi.
\newblock Numerical estimate of the {K}ardar--{P}arisi--{Z}hang universality
  class in $(2+1)$ dimensions.
\newblock \emph{Physical Review E}, 92\penalty0 (1):\penalty0 010101, 2015.

\bibitem[Penrose(2008)]{penrose08}
Mathew~D. Penrose.
\newblock Growth and roughness of the interface for ballistic deposition.
\newblock \emph{Journal of Statistical Physics}, 131\penalty0 (2):\penalty0
  247--268, 2008.

\bibitem[Penrose and Yukich(2001)]{penroseyukich01}
Mathew~D. Penrose and J.~E. Yukich.
\newblock Mathematics of random growing interfaces.
\newblock \emph{Journal of Physics A: Mathematical and General}, 34\penalty0
  (32):\penalty0 6239--6247, 2001.

\bibitem[Penrose and Yukich(2002)]{penroseyukich02}
Mathew~D. Penrose and Joseph~E. Yukich.
\newblock Limit theory for random sequential packing and deposition.
\newblock \emph{Annals of Applied Probability}, 12\penalty0 (1):\penalty0
  272--301, 2002.

\bibitem[Pimentel(2018)]{pimentel18}
Leandro P.~R. Pimentel.
\newblock Local behaviour of {A}iry processes.
\newblock \emph{Journal of Statistical Physics}, 173\penalty0 (6):\penalty0
  1614--1638, 2018.

\bibitem[Pimentel(2021{\natexlab{a}})]{pimentel21a}
Leandro P.~R. Pimentel.
\newblock Brownian aspects of the {KPZ} fixed point.
\newblock In \emph{In and Out of Equilibrium 3: Celebrating Vladas
  Sidoravicius}, pages 711--739. Springer, 2021{\natexlab{a}}.

\bibitem[Pimentel(2021{\natexlab{b}})]{pimentel21b}
Leandro P.~R. Pimentel.
\newblock Ergodicity of the {KPZ} fixed point.
\newblock \emph{ALEA. Latin American Journal of Probability and Mathematical
  Statistics}, 18\penalty0 (1):\penalty0 963--983, 2021{\natexlab{b}}.

\bibitem[Quastel(2012)]{quastel12}
Jeremy Quastel.
\newblock Introduction to {KPZ}.
\newblock In \emph{Current Developments in Mathematics, 2011}, pages 125--194.
  International Press, Somerville, MA, 2012.

\bibitem[Sepp{\"a}l{\"a}inen(2000)]{seppalainen00}
Timo Sepp{\"a}l{\"a}inen.
\newblock Strong law of large numbers for the interface in ballistic
  deposition.
\newblock \emph{Annales de l'Institut Henri Poincar{\'e} (B) Probability and
  Statistics}, 36\penalty0 (6):\penalty0 691--736, 2000.

\bibitem[Vold(1959)]{vold59}
Marjorie~J. Vold.
\newblock A numerical approach to the problem of sediment volume.
\newblock \emph{Journal of Colloid Science}, 14\penalty0 (2):\penalty0
  168--174, 1959.

\bibitem[Yang(2021)]{yang21}
Kevin Yang.
\newblock Non-stationary {KPZ} equation from {ASEP} with slow bonds.
\newblock \emph{arXiv preprint arXiv:2102.02213}, 2021.

\end{thebibliography}

\end{document}